\documentclass[11pt]{amsart}

\usepackage{wrapfig}

\usepackage[letterpaper,margin=1in]{geometry}

\usepackage{amssymb,amsmath,graphicx,amsthm,mathrsfs,amsbsy}
\usepackage{hyperref}
\usepackage{algorithm}
\usepackage{algorithmicx}
\usepackage{algpseudocode}
\usepackage{color}

\usepackage{subcaption}
\usepackage{tikz}

\usepackage{amscd,verbatim}
\usepackage{enumerate}
\usepackage{bm}

\graphicspath{{Figures/}{Figures/images/}}

\usetikzlibrary{
  decorations.pathmorphing,
  decorations.pathreplacing
}

\DeclareMathOperator*{\argmin}{arg\,min}

\newtheorem{theorem}{Theorem}[section]

\newtheorem{lemma}[theorem]{Lemma}

\newtheorem{assumption}[theorem]{Assumption}

\def\c1{\mathbf 1}

\newcommand{\R}[1]{{\rm I\! R}^{#1}}

\begin{document}

\title{Constructing Frequency Domains on Graphs in Near-Linear Time}
\author{John C. Urschel\textsuperscript{a}}
\address{\textsuperscript{a}Department of Mathematics, Massachusetts Institute of Technology,
  Cambridge, MA, USA.}
 \author{Wenfang Xu\textsuperscript{b}}
\author{Ludmil T. Zikatanov\textsuperscript{b,c,$\ast$}}\thanks{$^\ast$Corresponding author. Email: ludmil@psu.edu}
\address{
  \textsuperscript{b}Department of Mathematics, Penn State University,
  University Park, PA, USA.}
\address{
  \textsuperscript{c}Institute for Mathematics and Informatics, Bulgarian Academy of Sciences, Sofia, Bulgaria.}

\maketitle

\begin{abstract}
  Analysis of big data has become an increasingly relevant area of
  research, with data often represented on discrete networks both
  constructed and organic. While for structured domains, there exist
  intuitive definitions of signals and frequencies, the definitions
  are much less obvious for data sets associated with a given
  network. Often, the eigenvectors of an induced graph Laplacian are
  used to construct an orthogonal set of low-frequency vectors. For
  larger graphs, however, the computational cost of creating such
  structures becomes untenable, and the quality of the approximation
  is adequate only for signals near the span of the set. We propose a
  construction of a full basis of frequencies with computational
  complexity that is near-linear in time and linear in storage. Using
  this frequency domain, we can compress data sets on unstructured
  graphs more robustly and accurately than spectral-based
  constructions.
\end{abstract}

\keywords{Keywords:
graph Laplacian; signals on graphs; graph
  filter}

\subjclass{AMS MSC 2010: 05C50, 05C85, 15A18, 94A12}

\section{Introduction}

Graphs provide a representation for data of interacting discrete
objects in a given system. For structured data, such as an image or
video, there exist extremely efficient methods for compression and
storage, such as the fast Fourier transform \cite{1978WinogradS-aa}
and wavelet approaches \cite{ref1}. These techniques are often hailed
as some of the most important and influential discoveries of the
$20^{\text{th}}$ century. When dealing with unstructured data or a
discrete graph (which lacks geometric foundation), the definition of a
frequency becomes much less clear and the analogous techniques less
obvious. This has made the area of signal processing on graphs a major
research area in the $21^{\text{st}}$ century thus far.

Sometimes, the graph is a constructed representation, such as a
``similarity'' graph \cite{Wu:1993:OGT:628307.628538}. This
construction is useful in areas such as spectral clustering
\cite{Ng01onspectral,Luxburg:2007:TSC:1288822.1288832}, data
visualization \cite{Battista:1998:GDA:551884}, and dimension reduction
of data \cite{Coifman20065,Belkin:2003:LED:795523.795528}. However,
often the graph is not a construct. Instead, the natural structure
from which the data comes is a pre-existing network, such as a
biological \cite{Bu03topologicalstructure,cite-key12}, economic
\cite{jackson2008social}, neurological \cite{cite-key}, transportation
\cite{bell1997transportation}, sensor \cite{olfati2005consensus}, or
social \cite{newman2006finding,kleinberg1999authoritative} network. In
what follows, we may safely ignore whether the given graph is organic
or constructed. In both cases, the connectivity of the graph
represents information regarding the similarity of discrete objects
and, therefore, implies similarity and smoothness of the data produced
by related objects.

Most often, an associated discrete Laplacian of the graph is used, and
the spectrum and eigenvectors of the Laplacian are used to define a
frequency domain, with the eigenspaces of minimal and maximal
eigenvalues producing low- and high-pass filters, respectively
\cite{6808520}. This definition is intuitive; it is well known that
the Fourier transform of a real-valued function on the real line is an
expansion in the eigenfunctions of the continuous Laplacian. There is
great debate as to which discrete Laplacian is the most suitable; the
answer varies depending on which properties are deemed most important
\cite{4613}. In this work, we consider the unnormalized graph
Laplacian of $G = (V,E)$, denoted by $L(G) \in \R{n\times n}$,
$n=|V|$, and defined by the bilinear form
\begin{equation} \label{eqn1}
\langle L(G) u,v \rangle = \sum_{(i,j)\in E} (u_i-u_j)(v_i-v_j)
\end{equation}
for any $u,v \in \R{n}$, though other choices, such as the normalized
or random walk Laplacian, would be equally appropriate. These
spectral-type frequency constructions have been studied extensively in
\cite{DBLP:journals/tsp/SandryhailaM13,6879640,shuman_ACHA_2013,shuman_TSP_2013,ricaud_SPIE_2013},
among others. Other techniques exist, such as wavelets on graphs \cite{rustamov2013wavelets,hammond2011wavelets,gavish2010multiscale,ram2011generalized,tremblay2014graph}, graph filterbanks \cite{narang2013compact}, or aggregation-based algorithms \cite{tremblay2016subgraph}. In addition, the application of many discrete signal processing concepts to graphs was studied in \cite{ sandryhaila2013discrete,sandryhaila2014discrete,sandryhaila2014big,chen2015discrete}. For an excellent coverage of the current innovations and
projected areas of future work and research for signal processing on graphs, we refer the reader to
\cite{shuman_SPM_2013}.

In what follows, we use the localization properties of the graph to
construct a multilevel structure of aggregations and produce a
complete frequency domain with complexity that is near-linear in time
and linear in storage, the first aggregation-based algorithm in the literature to do so. Our technique allows us to construct frequencies on large networks when the complexity of other methods may
not be feasible. In addition, the efficient computation of a full
basis of frequencies allows a much more robust representation of
signals and does not rely heavily on the assumption of exclusively
low-frequency data.

\section{Near-Linear Time Frequency Domain Construction} \label{sec2}

Let $G=(V,E)$, $V = \{1,...,n\}$, be a given simple, connected, and undirected graph. For
each vertex $i\in V$, we define its neighborhood $N(i)$ and degree
$d(i)$ as
\[
N(i) = \{j\in V\;|\; (i,j)\in E\}, \quad 
d(i) = |N(i)|. 
\] 
For $w\in \R{n}$ and
  $\sigma \subset \{1,\ldots,n\}$, $| \sigma | = k$, by $w_\sigma \in \R{n}$ and $[w]_\sigma \in \R{k}$, we denote the restrictions of $w$ to $\sigma$ in $\R{n}$ and $\R{k}$, respectively. We denote the subgraph of $G$ induced by vertices $U \subset V$ by $G[U]$. In what follows, the $\ell^2$-inner product is denoted by $\langle \cdot , \cdot \rangle$ and the standard (Euclidean) norm induced by the $\ell^2$-inner product is denoted by $\| \cdot \|$. An aggregation $\mathcal{A} = \{a_1,...,a_{|\mathcal{A}|} \}$ of a graph $G = (V,E)$ is a non-overlapping partition of $V$ into connected components, namely,
\begin{enumerate}
\item $V = \bigcup_{a\in\mathcal{A}} a$,
\item $a\cap a' = \emptyset$, 
$a,a'\in\mathcal{A}$, $a \ne a'$,
\item $G[a]$ connected, $a \in \mathcal{A}$.
\end{enumerate}

 Here, and in what follows, we will consider only aggregations $\mathcal{A}$ such that $|a|\ge 2$ for all $a \in \mathcal{A}$. We can construct such a set using a randomized augmented matching
algorithm, such as Algorithm \ref{alg1}. Other suitable aggregation algorithms are found
in~\cite{Rajagopalan06dataaggregation}. When constructing an aggregation, we aim to minimize both the number of aggregates of order greater than two and the largest order of any aggregate, while using an algorithm that has complexity at worst near-linear in the size of the graph. Intuitively, for the majority of graphs, we should be able to accomplish both of these goals reasonably well. In particular, we recall a corollary of a famous result of Erdos and Renyi.

\begin{theorem}[\cite{erdos1966existence}] \label{thm:er}
Let $G_{n,m}$ be the graph with vertex set $V = \{1,...,n\}$ and edge set $E$ a random $m$-subset of $\{ (i,j) | i, j \in V, i \ne j \}$. If $m \ge (\frac{1}{2}+C) n \log n$ for some fixed constant $C$, then
$$ \lim_{\tiny \begin{array}{c} n \rightarrow \infty \\ n \text{ even}\end{array}} \mathrm{P} [ G_{n,m} \text{ has a perfect matching }] = 1.$$
\end{theorem}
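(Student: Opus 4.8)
\emph{Proof sketch.} I would first reduce to the $G_{n,p}$ model. Since ``$G$ has a perfect matching'' is a monotone increasing property and $G_{n,m}$ is stochastically increasing in $m$, it suffices to treat $m=\lceil(\tfrac12+C)n\log n\rceil$, and then, by the standard comparison of the $G_{n,m}$ and $G_{n,p}$ models on monotone properties, to work with $G=G_{n,p}$ for $p=\frac{c\log n}{n}$ where $c:=1+2C$; the only feature of $p$ that matters is that $c>1$. One direction is immediate: an isolated vertex destroys any perfect matching. For the converse, a first-moment estimate does most of the work: the expected number of isolated vertices is $n(1-p)^{n-1}=n^{1-c+o(1)}\to 0$, so $\delta(G)\ge 1$ with high probability (the same computation gives $\delta(G)\ge k$ for any fixed $k$). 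What remains is to promote $\delta(G)\ge 1$ to a perfect matching when $n$ is even.

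The cleanest route is to invoke the hitting-time form of the theorem \cite{erdos1966existence}, later sharpened by Bollob\'as and Thomason: in the random graph process on $n$ (even) vertices, with high probability the edge that first raises the minimum degree to one simultaneously creates a perfect matching; since $(\tfrac12+C)n\log n$ exceeds the minimum-degree-one threshold $\sim\tfrac12 n\log n$ by $\Omega(n\log n)$ edges, this gives the result. Equivalently, since $c>1$ the density $p$ comfortably clears the Hamiltonicity threshold $\tfrac1n(\log n+\log\log n+\omega(1))$, so $G$ is Hamiltonian with high probability (provable via P\'osa's rotation--extension technique together with the expansion of random graphs), and a Hamilton cycle on an even number of vertices contains a perfect matching obtained by selecting every other edge.

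If instead one wants a self-contained argument, the plan is to use Tutte's theorem: if $G$ has no perfect matching, there is $S\subseteq V$ with $o(G-S)>|S|$, where $o(\cdot)$ counts odd-order components. Parity forces $o(G-S)\ge|S|+2$, and since a graph on $n-|S|$ vertices has at most $n-|S|$ components we also get $|S|\le n/2-1$. Bundling a suitable sub-collection of the $\ge|S|+2$ components of $G-S$ yields disjoint sets $U$ and $W:=V\setminus(S\cup U)$ with $e_G(U,W)=0$, $|U|\ge|S|+1$, and $|W|$ at least a constant fraction of $n$. This configuration should be excluded with high probability by a case analysis on $|S|$: when $|S|<\delta(G)$ the graph $G-S$ is connected --- in this regime the vertex connectivity equals the minimum degree --- so $o(G-S)\le 1$, a contradiction; and when $|U|$ is at least a large enough constant multiple of $n/\log n$ (so that $|U||W|=\Omega(n^2/\log n)$) the union bound $\binom{n}{|S|}\binom{n}{|U|}(1-p)^{|U||W|}$ --- with no factor for $W$, which is determined by $S$ and $U$ --- is $o(1)$. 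I expect the remaining, intermediate values of $|S|$ to be the main obstacle: excluding them requires structural information about $G$ finer than $\delta(G)\ge 1$, and this is the delicate heart of the Erd\H{o}s--R\'enyi argument, most acute precisely when $c$ is only slightly above $1$ so that $\delta(G)$ is but a small multiple of $\log n$ --- which is why one ordinarily appeals to the hitting-time theorem instead.
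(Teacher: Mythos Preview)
The paper does not prove this statement; it is quoted as a classical result of Erd\H{o}s and R\'enyi with a citation to \cite{erdos1966existence} and used as a black box to motivate the aggregation algorithm. There is therefore no proof in the paper to compare your proposal against.

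As for your sketch on its own merits: the reduction to $G_{n,p}$ via monotonicity is standard and fine, and both shortcuts you name --- the hitting-time theorem and the Hamiltonicity threshold --- are legitimate ways to conclude, though each is itself at least as deep as the statement you are proving. In the self-contained Tutte-based outline you correctly identify the structure of the argument and are candid that the intermediate range of $|S|$ is where the real work lies; what you have written there is a plan rather than a proof, since you have not actually shown how to bridge the gap between $|S|<\delta(G)$ and $|U|\gtrsim n/\log n$. If a full proof were required, that gap would need to be closed, typically via finer expansion estimates for small sets in $G_{n,p}$.
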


All the experiments performed will use Algorithm \ref{alg1}, but we note that there does exist an algorithm for finding an approximately maximum matching in near-linear time \cite{duan2010approximating}.

\begin{algorithm}
\caption{Construct Aggregation $\mathcal{A}$ \label{alg1}}
\begin{enumerate}
\item[]
{ \bf Set:} ${\mathcal{A}}=\emptyset$, 
$\mathcal{S}=\emptyset$,   $\mathcal{R} = V$.
\item[] {\bf For} $i\in \mathcal{R}$, set $\mathcal{R}_i=\mathcal{R} \cap N(i)$.
  \begin{enumerate}
  \item[] {\bf If} $\mathcal{R}_i \ne \emptyset$, {\bf then}
    \begin{enumerate}
    \item[] \(j = \arg\min \{d(\hat j) \; | \; \hat j \in \mathcal{R}_i \};\)
    \item[] update: $\mathcal{A}\leftarrow \mathcal{A} \cup \{i,j\}$,
      $\mathcal{R} \leftarrow \mathcal{R} \setminus \{i,j\}$,   \\ $\text{\qquad \quad}$
      $d(k)\leftarrow d(k)-1$, $k\in N(i)$,\\ $\text{\qquad \quad}$ $d(\ell)\leftarrow d(\ell)-1$, $\ell \in N(j)$;
    \end{enumerate}
  \item[] {\bf else} update: $\mathcal{S}\leftarrow \mathcal{S} \cup \{i\}$, 
      $\mathcal R \leftarrow \mathcal R \setminus \{i \}$.

  \end{enumerate} 
\item[] {\bf For} $i \in \mathcal{S}$, 
  \begin{enumerate}
  \item[] $a = \arg\min\{|b|\; | \;  b \in \mathcal{A}, b\cap N(i) \neq \emptyset\}$, \\ $a\leftarrow a\cup \{i\}$.
  \end{enumerate}
\end{enumerate}
\end{algorithm}

For a given aggregation $\mathcal{A}$, we can define an associated
graph on the aggregates, namely,
$G_\mathcal{A} = (V_\mathcal{A}, E_\mathcal{A})$, with
$$ V_\mathcal{A} = \{ 1 , ... , |\mathcal{A}| \},$$
$$E_\mathcal{A} = \{ (i,j) \; | \; \exists \; u \in a_i, v \in a_j, (u,v) \in E \}.$$

For each aggregate $a_i \in \mathcal{A}$, consider the
eigendecomposition
\begin{equation} \label{eqn2}
  L(G[a_i]) =  Q_{a_i} \Lambda_{a_i} Q_{a_i}^*,
\end{equation}
with $\Lambda_{a_i}(j,j) \le \Lambda_{a_i}(k,k)$, $j \le k$, and
$Q_{a_i}$ unitary. Let $Q_{\mathcal{A}}\in \R{n\times n}$ be the unitary matrix acting on $G$ whose restriction on $a_i$
equals $Q_{a_i}$, namely,
\[
[Q_{\mathcal{A}} u ]_{a_i} = Q_{a_i}^* [ u ]_{a_i}, \quad \mbox{for all}\quad u\in \R{n}, 
\quad a_i \in \mathcal{A}. 
\] 
We define a partition of the set of vertices
$V = V_+ \cup V_- \cup V_*$, where
\begin{eqnarray*}
V_+ &=& \{ i \in V | \; \exists \; j \text{ s.t. } a_j(1) = i \},\\
V_- &=& \{ i \in V | \; \exists \; j \text{ s.t. } a_j(2) = i \},\\
V_* &=&  V \setminus [ V_+ \cup V_-].
\end{eqnarray*}
Therefore, for a permutation 
\(\pi:\{1:n\}\mapsto \{1:n\}\) and any $u\in \R{n}$, we have
\[
\pi Q_{\mathcal{A}} u = \begin{bmatrix} u_+ \\ u_- \\
  u_* \end{bmatrix}, \; 
u_{\pm} = [ Q_{\mathcal{A}} u]_{{V}_\pm}, \;  u_* = [Q_{\mathcal{A}} u]_{{V}_*}.
\]
Each entry in $u_+$ and $u_-$ is associated with a vertex
of the aggregation graph $G_{\mathcal{A}}$. We then have two copies of
$G_{\mathcal{A}}$, one with vertex set $V_{+}$ and the other with
  vertex set $V_-$, which allows us to repeat our aggregation procedure for the graph $G_{\mathcal{A}}$.
  
  When $|a_i| =2$, which, by Theorem \ref{thm:er} and
  inspection of Algorithm \ref{alg1}, occurs the majority of the time for well-behaved graphs,
  we have
\[ 
Q_{a} = \frac{1}{\sqrt{2}} \begin{bmatrix} 1 & 1 \\ 1 & -1  \end{bmatrix}.
\]
It follows that, for a given aggregate
$\mathcal{A}\ni a = \{ i_1 , i_2 \}$, the vectors $u_+$ and $u_-$
restricted to $a$ are given by the normalized sum and difference of
the elements:
\begin{eqnarray*}
\sqrt{2} \; [u_+ ]_{a} &=& u_{i_1} + u_{i_2},\\
\sqrt{2} \; [u_- ]_{a} &=& u_{i_1} - u_{i_2}.
\end{eqnarray*}

We can apply this procedure
recursively until we have
$|\mathcal{A}_{J}|=1$ for some $J$.  This results in a multilevel structure of graphs
$G_{\mathcal{A}_j}$, aggregations $\mathcal{A}_j$, unitary matrices $Q_{\mathcal{A}_j}$, and permutations $\pi_j:\{1:n_j\}\mapsto \{1:n_j\}$, for $j = 0,...,J$, where
$n_j=|\mathcal{A}_{j-1}|$ is the number of aggregates on level $j-1$, $n_0 := n$, and
each $Q_{\mathcal{A}_j}$ is an $n_j\times n_j$ matrix. For an illustration of this multilevel structure for a structural discretization of a Comanche helicopter, see Figure \ref{figheli}.

\begin{figure}[]
  \begin{subfigure}{0.32\textwidth}
    {\includegraphics[width = 1.65in, height = 1 in]{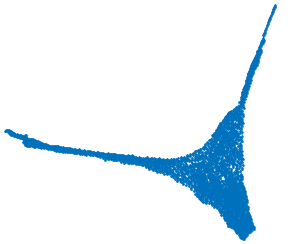}} \quad
    \caption{Original Graph $G$}
  \end{subfigure}
  \begin{subfigure}{0.32\textwidth}
    {\includegraphics[width = 1.65in,height = 1in]{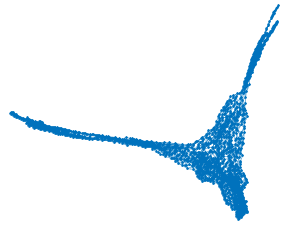}}
    \caption{Aggregation Graph $G_{\mathcal{A}_0}$}
\end{subfigure}
\begin{subfigure}{0.32\textwidth}
  {\includegraphics[width = 1.65in, height = 1in]{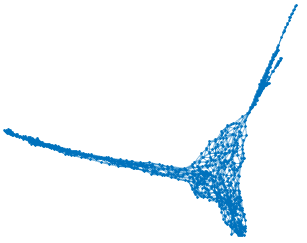}}
  \caption{Aggregation Graph $G_{\mathcal{A}_1}$}
\end{subfigure}
\begin{subfigure}{0.32\textwidth}
  {\includegraphics[width = 1.65in, height = 1in]{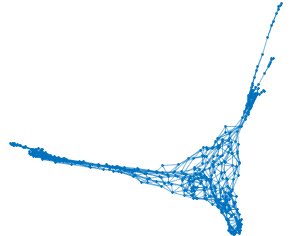}}
  \caption{Aggregation Graph $G_{\mathcal{A}_2}$}
\end{subfigure}
\begin{subfigure}{0.32\textwidth}
  {\includegraphics[width = 1.65in, height = 1in]{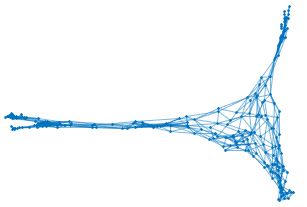}}
  \caption{Aggregation Graph $G_{\mathcal{A}_3}$}
\end{subfigure}
\begin{subfigure}{0.32\textwidth}
  {\includegraphics[width = 1.65in, height = 1in]{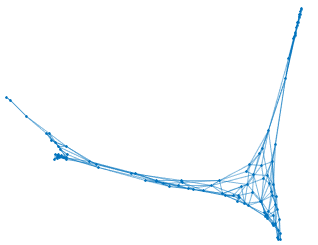}}
  \caption{Aggregation Graph $G_{\mathcal{A}_4}$}
\end{subfigure}
\caption{Successive application of random aggregation Algorithm \ref{alg1} to a structural discretization of a Comanche helicopter. Source \cite{Davis:2011:UFS:2049662.2049663}.}
\label{figheli}
\end{figure}

Let us define
\[
\mathcal{Q}_j' = 
\operatorname{diag}(\underbrace{\pi_j Q_{\mathcal{A}_j},\ldots, \pi_j Q_{\mathcal{A}_j}}_{\textstyle 2^j},I_j).
\] 
The matrix $I_j$ is the $m_j\times m_j$ identity matrix, where
$m_0=0$, and $m_{j} = m_{j-1} + n_{j-1} - 2 n_{j}$, for
$j=1,\ldots,J$. Let $\Pi_j\colon \{1:n\} \to \{1:n\}$ be a permutation
such that
\begin{equation*}
  \Pi_j
  \begin{bmatrix}
    u_{1,+} \\ u_{1,-} \\ u_{1,*} \\ \vdots \\ \vdots \\ u_{2^j,+} \\
    u_{2^j,-} \\ u_{2^j,*} \\ u_*
  \end{bmatrix} =
  \begin{bmatrix}
    u_{1,+} \\ u_{1,-} \\ \vdots \\ u_{2^j,+} \\ u_{2^j,-} \\
    u_{2^j,*} \\ \vdots \\ u_{1,*} \\ u_*
  \end{bmatrix}, \quad \parbox{.38\linewidth}{for
    $u_{i,\pm}\in\mathbb{R}^{n_{j+1}}$,
    $u_{i,*}\in\mathbb{R}^{n_j-2n_{j+1}}$, $i =
    1,\dots,2^j$, and $u_*\in\mathbb{R}^{m_j}$.}
\end{equation*}
We then define
\begin{equation*}
  \mathcal Q_j = \Pi_j\mathcal{Q}_j', \;
  \mathcal Q = \prod_{j=0}^J \mathcal{Q}_{J-j}.
\end{equation*}
By the properties of unitary matrices, $\mathcal{Q}$ is invertible,
and its inverse is
\[
\mathcal{Q}^{-1}=\mathcal Q^* = \prod_{j=0}^J \mathcal{Q}^*_{j}.
\]  
The columns of $\mathcal{Q^*}$ form an orthogonal basis
of $\R{n}$, referred to as a basis in the frequency domain
or a wavelet basis.  The rationale given above is summarized in Algorithm~\ref{alg2}.

\begin{algorithm}
	\caption{Make Frequency Basis and Filter Signal\label{alg2}}
  \begin{enumerate}
  \item Construct aggregates and unitary matrices
    $\{ \mathcal{A}_i , \pi_i Q_{\mathcal{A}_i} \}_{i=0}^J$. Store
    $\{ \pi_i Q_{\mathcal{A}_i} \}_{i=0}^J$.
  \item Given signal $u\in \R{n}$, define
    $\widehat u = \prod_{i=0}^J \mathcal {Q}_{J-j} u$ as the
    frequency representation of $u$.
  \item Given $k \in \{1,...,n\}$, let $$\sigma_k = \arg\max \{ \| \widehat u_{\sigma} \| \; | \; | \sigma | = k \}.$$ Define the $k$-frequency filter of $u$ as 
\( v = \mathcal{Q}^* \widehat{u}_{\sigma_k}\).
  \end{enumerate}
\end{algorithm}

In practice, the eigendecomposition in equation (\ref{eqn2}) is not computed explicitly. Instead, a dictionary of
eigendecompositions for low-order graphs (e.g. $|V| \le 5$) is stored a
priori. The lookup time for this dictionary is an $O(1)$
procedure. Because the aggregate size is rarely greater than three for well-behaved graphs,
this results in significantly less computation. For the remainder of the paper, we will make the following assumption:

\begin{assumption}
There exists some constant $C_{\mathcal{A}}$, independent of $n$, such that 
$$\max_{j=0,...,J} \; \max_{a \in \mathcal{A}_j} \; |a| \le C_{\mathcal{A}}.$$
\end{assumption}

Of course, there exists examples for which this is not the case (e.g. the star graph $S_n$), but Theorem \ref{thm:er} and the experiments in Section \ref{sec:num} support this assumption in practice.

Once the multilevel structure has been constructed, the aggregations can be
discarded and only the sparse unitary transformations need to be
stored. Furthermore, the application of the local orthogonal matrices
require only addition and subtraction, followed by a single
normalization to the entire vector at the end of each level. Because floating point
addition is typically quicker than floating point multiplication, the
application of $\mathcal{Q}_{\mathcal{A}_j}$ is more ideal
\cite{goldberg1991every}. Finally, we explicitly prove the complexity and storage of Algorithm 2.

\begin{theorem}
	Algorithm 2 has complexity $O(|E| \log |V|)$ and storage $O(|V|)$.
\end{theorem}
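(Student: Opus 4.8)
The plan is to split the work of Algorithm~\ref{alg2} across the $J+1$ levels of the multilevel hierarchy and sum the resulting bounds, using two structural facts: the hierarchy has only $O(\log|V|)$ levels, and no object (graph, edge set, active block) grows as we descend. First I would establish the level count. Every aggregate has at least two vertices, so each aggregation partitions a set of size $n_j$ into at most $n_j/2$ pieces; hence $n_{j+1}=|\mathcal{A}_j|\le n_j/2$ and $n_j\le n/2^j$. Termination at $|\mathcal{A}_J|=1$ forces $2^{J+1}\le n$, i.e.\ $J=O(\log|V|)$. I would also record that $G_{\mathcal{A}_j}$ is obtained from $G_{\mathcal{A}_{j-1}}$ by identifying vertices within a common aggregate and deleting loops and parallel edges (with $G_{\mathcal{A}_{-1}}:=G$), so $|E_{\mathcal{A}_j}|\le|E_{\mathcal{A}_{j-1}}|\le\dots\le|E|$, and that the ``active'' block at level $j$ has size $2^{j}n_j\le n$, which together with $n_{j+1}\le n/2^{j+1}$ gives $2^{j}n_{j+1}\le n/2$.

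Next I would bound Step~1 (building and storing the hierarchy). For one level, running Algorithm~\ref{alg1} on $G_{\mathcal{A}_{j-1}}$ costs $O(|E_{\mathcal{A}_{j-1}}|)$: in the first pass each surviving vertex is processed once and each vertex is chosen as a partner at most once, and processing a pair $\{i,j\}$ takes $O(d(i)+d(j))$ time when the $\arg\min$ is a linear scan over $\mathcal{R}_i$ and the (non-increasing, hence originally bounded) degrees are kept in an integer array with membership in $\mathcal{R}$ in a boolean array, so the pass is $O(\sum_v d(v))=O(|E_{\mathcal{A}_{j-1}}|)$; the second pass over $\mathcal{S}$ is similarly $O(\sum_{i\in\mathcal{S}}d(i))=O(|E_{\mathcal{A}_{j-1}}|)$. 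Forming $G_{\mathcal{A}_j}$ is a single scan of the edge list with endpoint relabeling followed by loop/duplicate removal by bucket sort, again $O(|E_{\mathcal{A}_{j-1}}|)$. Finally, under the Assumption each block $Q_{a}$ has order at most $C_{\mathcal{A}}$, so each eigendecomposition (or dictionary lookup) is $O(1)$ and the $|\mathcal{A}_j|$ of them cost $O(|\mathcal{A}_j|)$. Thus one level costs $O(|E_{\mathcal{A}_{j-1}}|)=O(|E|)$, and summing over $j=0,\dots,J$ gives $O(|E|\log|V|)$.

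For Steps~2 and~3 I would show the signal transform is $O(|V|)$ per level: applying $\mathcal{Q}_j=\Pi_j\mathcal{Q}_j'$ consists of a permutation ($O(n)$), the $2^{j}$ copies of the block-diagonal $\pi_j Q_{\mathcal{A}_j}$ whose $2^{j}n_{j+1}\le n/2$ blocks each have order $\le C_{\mathcal{A}}$ (so $O(C_{\mathcal{A}}^2 n)$ operations), and one global normalization ($O(n)$); over $O(\log|V|)$ levels this is $O(|V|\log|V|)$, selecting $\sigma_k$ is $O(|V|)$ (or $O(|V|\log|V|)$ by sorting), and applying $\mathcal{Q}^*$ is a further $O(|V|\log|V|)$. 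Since $G$ is connected, $|V|-1\le|E|$, so all of this is absorbed into $O(|E|\log|V|)$. For storage, the only data kept after construction are the transformations $\{\pi_j Q_{\mathcal{A}_j}\}_{j=0}^J$ (the graphs and aggregations are discarded, and each $\Pi_j$ admits an $O(n_j)$-size description and is applied in $O(n)$ time); in sparse form $\pi_j Q_{\mathcal{A}_j}$ has $\sum_{a\in\mathcal{A}_j}|a|^2\le C_{\mathcal{A}}\sum_{a\in\mathcal{A}_j}|a|=C_{\mathcal{A}}n_j$ nonzeros, so the total is $\le C_{\mathcal{A}}\sum_{j=0}^{J}n/2^{j}\le 2C_{\mathcal{A}}n=O(|V|)$, as claimed. (The working memory during construction is $O(|E|)$, dominated by the current quotient graph, but the persistent transform needs only $O(|V|)$.)

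The step I expect to require the most care is showing that one level of Step~1 costs $O(|E|)$ rather than $O(|E|\log|E|)$: if the $\arg\min$ selections in Algorithm~\ref{alg1} were implemented with a priority queue, or the parallel-edge removal in forming $G_{\mathcal{A}_j}$ with a comparison sort, one would incur an extra logarithmic factor and only obtain $O(|E|\log^2|V|)$, so the array-based bookkeeping and bucket-sort deduplication are essential to the stated bound. Everything else is geometric-series accounting whose convergence rests on the Assumption keeping the per-aggregate work and storage $O(1)$.
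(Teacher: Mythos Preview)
Your proposal is correct and follows essentially the same approach as the paper's proof: bound the number of levels by $O(\log|V|)$, bound the per-level cost of Step~1 by $O(|E|)$ (Algorithm~\ref{alg1} plus constant-sized eigendecompositions), and bound the nonzeros of each $Q_{\mathcal{A}_j}$ by $C_{\mathcal{A}} n_j$, summing geometrically to $O(|V|)$. Your argument is considerably more detailed than the paper's brief sketch---in particular, the paper does not explicitly track $|E_{\mathcal{A}_j}|\le|E|$, analyze Steps~2--3 of Algorithm~\ref{alg2}, or discuss the array/bucket-sort bookkeeping needed to keep one level at $O(|E|)$ rather than $O(|E|\log|E|)$---but the underlying strategy is the same.
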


\begin{proof}
To verify that the algorithm is near-linear in complexity, we note
that the complexity of Algorithm~\ref{alg1} is
$O(|E| )$, and that the eigendecomposition requires at
most $|V|/2$ dictionary lookups or solutions to constant sized eigenvalue problems. The structure can have at most
$O( \log |V| )$ levels, each with at most half the vertices of the
previous level. This gives the overall computational cost of
$O(|E| \log ( |E| ) )$. Each unitary matrix $\mathcal{Q}_{\mathcal{A}}$
has at most $C_{\mathcal{A}} |V|$ non-zero entries. This results in an overall storage cost of $O(|V|)$.
\end{proof}

\subsection{An Example of Algorithm~\ref{alg2}}

In this section, on a small example we demonstrate the essential steps
in Algorithm~\ref{alg1} and Algorithm~\ref{alg2}.  Consider the graph
depicted in Fig.~\ref{fig:alg2-ex}.  The matching algorithm
Algorithm~\ref{alg1} starts with a graph $G$. The vertices of $G$ are
grouped by matching and they form a new graph $G_{\mathcal{A}_0}$. Next,
the graph
$G_{\mathcal{A}_1}$ is generated, and we continue until
a graph $G_{\mathcal{A}_2}$ with only one
vertex is generated.
Note that lines decorated with springs are those
chosen in the initial pairwise matching and the lines with small
segments are those added for isolated points.

After the matching is available, we compute the matrices
$Q_j$ (constructed in Algorithm~\ref{alg2}) as follows.
\begin{figure}
  \centering
  \begin{subfigure}[t]{5cm}
    \centering
    \caption*{$G$}
    \begin{tikzpicture}[scale=1.5,
      vertex/.style={circle,draw,thick,minimum size=5pt,inner sep=0pt},
      edge/.style={draw,thick,-},
      connected/.style={draw,thick,decorate,decoration={snake,
          segment length=2.5mm}},
      weakly connected/.style={draw,thick,decorate,decoration={ticks,
          pre length=1.1mm,post length=1.1mm,segment length=1.8mm,
          amplitude=1.1mm}}]
      % Draw the vertices
      \foreach \pos / \vertex in {(0,0)/1, (1,0)/2, (2,0)/3, (3,0)/4}
      \node[vertex,label={above:$\vertex$}] (\vertex) at \pos {};
      \foreach \pos / \vertex in {(1,-1)/5}
      \node[vertex,label={left:$\vertex$}] (\vertex) at \pos {};
      \foreach \pos / \vertex in {(2,-1)/6, (3,-1)/7, (0,-2)/8,
        (1,-2)/9}
      \node[vertex,label={below:$\vertex$}] (\vertex) at \pos {};
      % Connect vertices with edges
      \foreach \source / \dest in {1/2, 2/3, 2/5,
        3/4, 3/5, 3/6, 4/5, 4/6, 5/6, 5/8, 5/9, 6/7, 8/9}
      \path[edge] (\source) -- (\dest);
      \foreach \source / \dest in {1/2, 3/4, 6/7, 8/9}
      \path[connected] (\source) -- (\dest);
      \foreach \source / \dest in {5/8, 5/9}
      \path[weakly connected] (\source) -- (\dest);
    \end{tikzpicture}
  \end{subfigure}
  \qquad
  \begin{subfigure}[t]{2cm}
    \centering
    \caption*{$G_{\mathcal{A}_0}$}
    \begin{tikzpicture}[scale=1.5,
      vertex/.style={circle,draw,thick,minimum size=5pt,inner sep=0pt},
      edge/.style={draw,thick,-},
      connected/.style={draw,thick,decorate,decoration={snake,
          segment length=2.5mm}}]
      % Draw the vertices
      \foreach \pos / \vertex in {(0,0)/1, (1,0)/4}
      \node[vertex,label={above:$\vertex$}] (\vertex) at \pos {};
      \foreach \pos / \vertex in {(1,-1)/2, (0,-1)/3}
      \node[vertex,label={below:$\vertex$}] (\vertex) at \pos {};
      % Connect vertices with edges
      \foreach \source / \dest in {1/3, 1/4, 2/3, 2/4, 3/4}
      \path[edge] (\source) -- (\dest);
      \foreach \source / \dest in {1/3, 2/4}
      \path[connected] (\source) -- (\dest);
    \end{tikzpicture}
  \end{subfigure}
  \qquad
  \begin{subfigure}[t]{2cm}
    \centering
    \caption*{$G_{\mathcal{A}_1}$}
    \begin{tikzpicture}[scale=1.5,
      vertex/.style={circle,draw,thick,minimum size=5pt,inner sep=0pt},
      edge/.style={draw,thick,-},
      connected/.style={draw,thick,decorate,decoration={snake,
          segment length=2.5mm}}]
      % Draw the vertices
      \foreach \pos / \vertex in {(0,0)/1, (1,0)/2}
      \node[vertex,label={above:$\vertex$}] (\vertex) at \pos {};
      % Connect vertices with edges
      \foreach \source / \dest in {1/2}
      \path[edge] (\source) -- (\dest);
      \foreach \source / \dest in {1/2}
      \path[connected] (\source) -- (\dest);
    \end{tikzpicture}
  \end{subfigure}
  \qquad
  \begin{subfigure}[t]{0.5cm}
    \centering
    \caption*{$G_{\mathcal{A}_2}$}
    \begin{tikzpicture}[scale=1.5,
      vertex/.style={circle,draw,thick,minimum size=5pt,inner sep=0pt}]
      % Draw the vertices
      \foreach \pos / \vertex in {(0,0)/1}
      \node[vertex,label={above:$\vertex$}] (\vertex) at \pos {};
    \end{tikzpicture}
  \end{subfigure}
  \caption{Matching algorithm applied on a graph}
  \label{fig:alg2-ex}
\end{figure}
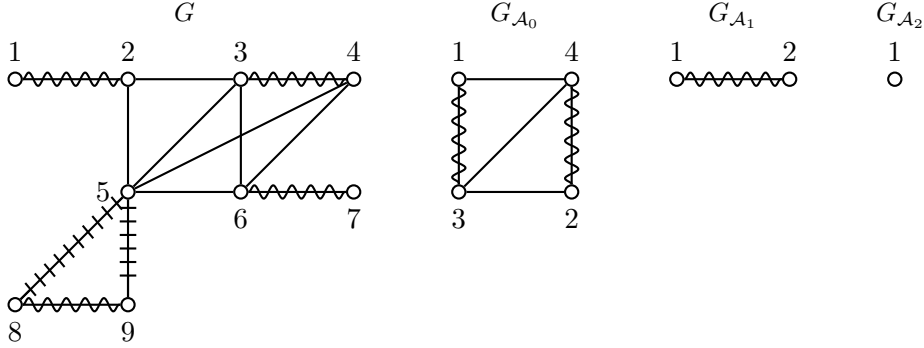
For the graph resulting from the
first matching $G_{\mathcal{A}_0}$, the orthgonal matrices
$Q_{a_i}$ are:
\begin{align*}
  & Q_{\{1,2\}} = Q_{\{3,4\}} = Q_{\{6,7\}} = \frac{1}{\sqrt2}
  \begin{bmatrix}
    1 & 1 \\ 1 & -1
  \end{bmatrix}, \\
  & Q_{\{5,8,9\}} =
  \begin{bmatrix}
    \frac{1}{\sqrt3} & \frac{1}{\sqrt2}  & \frac{1}{\sqrt2}  \\
    \frac{1}{\sqrt3} & 0                 & -\frac{1}{\sqrt2} \\
    \frac{1}{\sqrt3} & -\frac{1}{\sqrt2} & 0
  \end{bmatrix},
\end{align*}
and we have the following transformation matrix
{\tiny
\begin{equation*}
  Q_{\mathcal{A}_0} =
  \frac{1}{\sqrt2}
  \begin{bmatrix}
    1 & 1 & 0 & 0 & 0 & 0 & 0 & 0 & 0 \\
    1 & -1 & 0 & 0 & 0 & 0 & 0 & 0 & 0 \\
    0 & 0 & 1 & 1 & 0 & 0 & 0 & 0 & 0 \\
    0 & 0 & 1 & -1 & 0 & 0 & 0 & 0 & 0 \\
    0 & 0 & 0 & 0 & \sqrt{\frac23} & 0 & 0 & \sqrt{\frac23} & \sqrt{\frac23} \\
    0 & 0 & 0 & 0 & 0 & 1 & 1 & 0 & 0 \\
    0 & 0 & 0 & 0 & 0 & 1 & -1 & 0 & 0 \\
    0 & 0 & 0 & 0 & 1 & 0 & 0 & 0 & -1 \\
    0 & 0 & 0 & 0 & 1 & 0 & 0 & -1 & 0
  \end{bmatrix}.
\end{equation*}
}
The permutation which is applied to this marix is
\begin{equation*}
  \pi_0 =
  \begin{pmatrix}
    1 & 2 & 3 & 4 & 5 & 6 & 7 & 8 & 9 \\
    1 & 5 & 4 & 8 & 3 & 2 & 6 & 7 & 9
  \end{pmatrix}.
\end{equation*}
and as a result we get
{\tiny
\begin{equation*}
    Q_0 =
  \frac{1}{\sqrt2}
  \begin{bmatrix}
    1 & 1 & 0 & 0 & 0 & 0 & 0 & 0 & 0 \\
    0 & 0 & 0 & 0 & 0 & 1 & 1 & 0 & 0 \\
    0 & 0 & 0 & 0 & \sqrt{\frac23} & 0 & 0 & \sqrt{\frac23} & \sqrt{\frac23}   \\
    0 & 0 & 1 & 1 & 0 & 0 & 0 & 0 & 0 \\
    1 & -1 & 0 & 0 & 0 & 0 & 0 & 0 & 0 \\
    0 & 0 & 0 & 0 & 0 & 1 & -1 & 0 & 0 \\
    0 & 0 & 0 & 0 & 1 & 0 & 0 & 0 & -1 \\
    0 & 0 & 1 & -1 & 0 & 0 & 0 & 0 & 0 \\
    0 & 0 & 0 & 0 & 1 & 0 & 0 & -1 & 0
  \end{bmatrix}.
\end{equation*}
}
For the graph resulting from the
second matching, $G_{\mathcal{A}_1}$, the matrix $Q_1$ has a block diagonal form,  $Q_1 = \operatorname{diag}(\pi_1Q_{\mathcal{A}_1},
\pi_1Q_{\mathcal{A}_1}, 1)$, where
\begin{equation*}
  Q_{\mathcal{A}_1} = 1
  \begin{bmatrix}
    1 & 0 & 1 & 0 \\
    0 & 1 & 0 & 1 \\
    1 & 0 & -1 & 0 \\
    0 & 1 & 0 & -1
  \end{bmatrix}, \quad
  \pi_1 = \operatorname{id}_{\{1,2,3,4\}}.
\end{equation*}
Therefore, for $G_{\mathcal{A}_2}$, we get
{\tiny
\begin{equation*}
  Q_1 =\frac{1}{\sqrt2}
  \begin{bmatrix}
    1 & 0 & 1 & 0 & 0 & 0 & 0 & 0 & 0 \\
    0 & 1 & 0 & 1 & 0 & 0 & 0 & 0 & 0 \\
    1 & 0 & -1 & 0 & 0 & 0 & 0 & 0 & 0 \\
    0 & 1 & 0 & -1 & 0 & 0 & 0 & 0 & 0 \\
    0 & 0 & 0 & 0 & 1 & 0 & 1 & 0 & 0 \\
    0 & 0 & 0 & 0 & 0 & 1 & 0 & 1 & 0 \\
    0 & 0 & 0 & 0 & 1 & 0 & -1 & 0 & 0 \\
    0 & 0 & 0 & 0 & 0 & 1 & 0 & -1 & 0 \\
    0 & 0 & 0 & 0 & 0 & 0 & 0 & 0 &   \sqrt2
  \end{bmatrix},\;\;
 Q_2 = 
  \frac{1}{\sqrt2}
  \begin{bmatrix}
    1 & 1 & 0 & 0 & 0 & 0 & 0 & 0 & 0 \\
    1 & -1 & 0 & 0 & 0 & 0 & 0 & 0 & 0 \\
    0 & 0 & 1 & 1 & 0 & 0 & 0 & 0 & 0 \\
    0 & 0 & 1 & -1 & 0 & 0 & 0 & 0 & 0 \\
    0 & 0 & 0 & 0 & 1 & 1 & 0 & 0 & 0 \\
    0 & 0 & 0 & 0 & 1 & -1 & 0 & 0 & 0 \\
    0 & 0 & 0 & 0 & 0 & 0 & 1 & 1 & 0 \\
    0 & 0 & 0 & 0 & 0 & 0 & 1 & -1 & 0 \\
    0 & 0 & 0 & 0 & 0 & 0 & 0 & 0 & \sqrt2
  \end{bmatrix}.
\end{equation*}
}
Finally, the matrices
$Q_0$, $Q_1$, and $Q_2$ are all constructed and ready to use
for approximation of signals as in Algorithm~\ref{alg2}.

\subsection{A nonlinear version of Algorithm~\ref{alg2}}
\label{nonlinear-alg2} In this subsection, we consider a nonlinear version of
Algorithm~\ref{alg2}, motived by ideas in nonlinear approximation
found in DeVore~\cite[Section~3]{devore1998}.

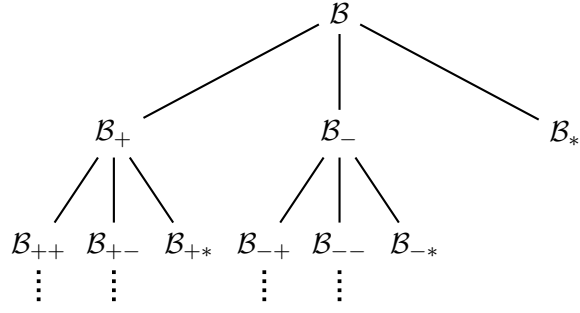
\begin{figure}[!htb]
  \centering
  \begin{tikzpicture}[thick,
    level 1/.style={level distance=1.6cm,sibling distance=3cm},
    level 2/.style={level distance=1.5cm,sibling distance=1cm},
    level 3/.style={level distance=.75cm,
      every child/.style={
        edge from parent/.style={draw,dotted,line width=0.5mm}}}]
    \node {$\mathcal{B}$}
    child {
      node {$\mathcal{B}_+$}
      child {
        node {$\mathcal{B}_{++}$}
        child
      }
      child {
        node {$\mathcal{B}_{+-}$}
        child
      }
      child {
        node {$\mathcal{B}_{+*}$}
      }
    }
    child {
      node {$\mathcal{B}_-$}
      child {
        node {$\mathcal{B}_{-+}$}
        child
      }
      child {
        node {$\mathcal{B}_{--}$}
        child
      }
      child {
        node {$\mathcal{B}_{-*}$}
      }
    }
    child {
      node {$\mathcal{B}_*$}
    };
  \end{tikzpicture}
  \caption{Hierarchy of orthonormal bases formed in constructing the
    frequency domain.}
  \label{fig:wavelet}
\end{figure}

As it is immediately seen, Algorithm \ref{alg2} produces not only a
frequency basis, but a hierarchical structure which defines a
hierarchy of bases (see Figure~\ref{fig:wavelet}). The collection of
all such bases is a frame for the underlying space~$\R{n}$. For a
given element $f\in \R{n}$ we choose expansion in a basis from this
frame which has fast decay in  the absolute values of the values of the
coefficients in the expansion. We now explain this in more detail and refer
to~\cite{devore1998} for more general cases.

Let $\mathcal{B} := \left\{\bm{e}_1, \bm{e}_2, \dots,
\bm{e}_n\right\}$ be the natural basis of $\mathbb{R}^n$. We can write
the matrix $(\pi Q_{\mathcal{A}})^*$ as
\begin{equation}\label{eq-def-basis}
  (\pi Q_{\mathcal{A}})^* =
  \begin{bmatrix}
    \bm{q}_{+,1} & \cdots & \bm{q}_{+,n_1} &
    \bm{q}_{-,1} & \cdots & \bm{q}_{-,n_1} &
    \bm{q}_{*,1} & \cdots & \bm{q}_{*,n-2n_1}
  \end{bmatrix}.
\end{equation}
We further define
\begin{alignat*}{2}
  & \widetilde{\bm{q}}_{s+} && :=
  \begin{bmatrix}
    \bm{q}_{s+,1} & \cdots & \bm{q}_{s+,n_{|s|+1}}
  \end{bmatrix}, \\
  & \widetilde{\bm{q}}_{s-} && :=
  \begin{bmatrix}
    \bm{q}_{s-,1} & \cdots & \bm{q}_{s-,n_{|s|+1}}
  \end{bmatrix}, \\
  & \widetilde{\bm{q}}_{s*} && :=
  \begin{bmatrix}
    \bm{q}_{s*,1} & \cdots & \bm{q}_{s*,n_{|s|}-2n_{|s|+1}}
  \end{bmatrix},
\end{alignat*}
where $s\in \{+,-,\}^j, j = 0,\dots,J$, and $s+$, $s-$ and $s*$ are the strings obtained by appending `$+$', `$-$' and `$*$' to $s$,
respectively. Given the above notation, one can rewrite
\eqref{eq-def-basis} as
\begin{equation*}
  (\pi Q_{\mathcal{A}})^* =
  \begin{bmatrix}
    \widetilde{\bm{q}}_+ & \widetilde{\bm{q}}_- & \widetilde{\bm{q}}_*
  \end{bmatrix}.
\end{equation*}
Let $\operatorname{cols}(A)$ be the set of column vectors of a matrix $A$, and define
\begin{equation*}
  \mathcal{B}_{s+} := \operatorname{cols}(\widetilde{\bm{q}}_{s+}), \quad
  \mathcal{B}_{s-} := \operatorname{cols}(\widetilde{\bm{q}}_{s-}), \quad
  \mathcal{B}_{s*} := \operatorname{cols}(\widetilde{\bm{q}}_{s*}).
\end{equation*}
We arrange $\mathcal{B}_+$, $\mathcal{B}_-$ and $\mathcal{B}_*$ as the
children of the root node $\mathcal{B}$ in the tree. In a similar
fashion, we recursively define
\begin{equation*}
  \begin{bmatrix}
    \widetilde{\bm{q}}_{s+} & \widetilde{\bm{q}}_{s-} &
    \widetilde{\bm{q}}_{s*}
  \end{bmatrix} := \widetilde{\bm{q}}_s (\pi_iQ_{\mathcal{A}_i})^*,
  \quad \text{for all } s\in  \{+,-\}^j, \ j = 0,\dots,J.
\end{equation*}
Clearly, $\mathcal{B}_{s+}  \cup \mathcal{B}_{s-} \cup \mathcal{B}_{s*}$ spans the same subspace of $\mathbb{R}^n$ as
$\mathcal{B}_s$. In addition, given $j \in \{0,1,\dots,J\}$, the set
of vectors
\begin{equation*}
  \bigcup_{s\in \{+,-,\}^j} (\mathcal{B}_{s+} \cup \mathcal{B}_{s-})
  \,\cup\, \bigcup_{i=0}^j\bigcup_{s\in \{+,-\}^i}\mathcal{B}_{s*}
\end{equation*}
are exactly the columns of
$\left(\prod_{i=0}^j\mathcal{Q}_{j-i}\right)^*$. 

The basis produced by Algorithm~\ref{alg2} is the specific case where
$j = J$. However, we can choose an adaptive basis by taking the union
of various sets $\mathcal{B}_\cdot$. Following~\cite{devore1998}, for
a given signal $f$ on the graph, we choose the basis
$\mathcal{B}^{\text{a}}$ adaptively so that the
$\ell^1$-norm of the coefficients of $f$ in this basis
\begin{equation*}
  N(f,\mathcal{B}^{\text{a}}) := \sum_{\bm{q}\in\mathcal{B}^{\text{a}}}
  \lvert(f, \bm{q})\rvert
\end{equation*}
is minimized. We note that this guarantees coefficient decay at least
as fast of that of the basis produced by Algorithm \ref{alg2}. We
describe the minimization procedure formally in Algorithm~\ref{alg3}.

\begin{algorithm}
  \caption{Find the Basis that Minimizes the $\ell^1$-norm of
    Coefficients}
  \label{alg3}
  \begin{algorithmic}[]

    \State For each
    $s\in \{+,-\}^{J+1}$, set
    $\mathcal{B}_s^{\text{a}} \leftarrow \mathcal{B}_s$.
    
    \For{$j = J, J-1, \dots, 0$}
    \For{$s\in \{+,-\}^j$}
    \State $\mathcal{B}_s^{\text{a}} \leftarrow
    \argmin \left\{ N(f,\mathcal{B}') \mid
      \mathcal{B}' \in \left\{ \mathcal{B}_s,
        \mathcal{B}_{s+}^{\text{a}} \cup \mathcal{B}_{s+}^{\text{a}}
        \cup \mathcal{B}_{s*} \right\} \right\}$
    \EndFor
    \EndFor

    \State Output the basis $\mathcal{B}^{\text{a}} \leftarrow \mathcal{B}_{\emptyset}^{\text{a}} $
  \end{algorithmic}
\end{algorithm}

In addition, we have the following complexity and storage guarantees.

\begin{theorem}
  Algorithm~\ref{alg3} has complexity
  $O(\lvert V\rvert\log \lvert V\rvert)$ and storage $\lvert V\rvert$.
\end{theorem}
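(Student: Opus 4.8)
The plan is to show that Algorithm~\ref{alg3}, implemented so that it never forms the (generally dense) basis vectors $\bm{q}$ explicitly but instead propagates the coefficient vector of $f$ down the tree of Figure~\ref{fig:wavelet} through the small local transforms and reuses the subtree optima in the loop already written in the algorithm, consists of one top‑down sweep that produces all the coefficient data followed by the bottom‑up loop, which only compares scalars already at hand. Throughout I use that, under the assumption $\max_{j}\max_{a\in\mathcal{A}_j}|a|\le C_{\mathcal{A}}$, each $\pi_j Q_{\mathcal{A}_j}$ is a permutation of a block‑diagonal orthogonal matrix with blocks of size at most $C_{\mathcal{A}}$, so applying $\pi_j Q_{\mathcal{A}_j}$ or its adjoint to a vector of length $n_j$ costs $O(C_{\mathcal{A}} n_j)=O(n_j)$.

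The first step is to record the two counting facts that make everything geometric. Because every aggregate has at least two vertices, $n_{j+1}=|\mathcal{A}_j|\le n_j/2$, so $n_j\le n/2^j$; combined with the identity $n=2^j n_j+m_j$ from the definition of $\mathcal{Q}_j'$ this yields $2^j n_j\le n=|V|$ for all $j$, and in particular $J=O(\log|V|)$. Hence the tree of Figure~\ref{fig:wavelet} has depth $O(\log|V|)$ and $\sum_{j}2^j=O(2^J)=O(|V|)$ nodes; moreover, for each level $j$ the blocks $\widetilde{\bm{q}}_s$, $s\in\{+,-\}^j$, each have $n_j$ columns, so their coefficient vectors have total length $2^j n_j\le|V|$, and $\sum_{s\in\{+,-\}^j}|\mathcal{B}_{s*}|\le 2^j n_j\le|V|$ as well.

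For the running time, the top‑down sweep maintains, at each node $s$, the coefficients of $f$ in the block $\widetilde{\bm{q}}_s$; expanding $s$ into its children $\widetilde{\bm{q}}_{s+},\widetilde{\bm{q}}_{s-},\widetilde{\bm{q}}_{s*}$ is a single application of $\pi_j Q_{\mathcal{A}_j}$, costing $O(n_j)$ per node and hence $O(|V|)$ per level, and it exposes, as partial sums of absolute values of the coefficients just computed, the scalars $N(f,\mathcal{B}_s)$ and $N(f,\mathcal{B}_{s*})$ — another $O(|V|)$ per level. The loop of Algorithm~\ref{alg3} then sets, for $j=J,\dots,0$ and each $s\in\{+,-\}^j$,
\[
 N\!\left(f,\mathcal{B}_s^{\mathrm{a}}\right)=\min\Bigl\{\,N(f,\mathcal{B}_s),\ N\!\left(f,\mathcal{B}_{s+}^{\mathrm{a}}\right)+N\!\left(f,\mathcal{B}_{s-}^{\mathrm{a}}\right)+N(f,\mathcal{B}_{s*})\,\Bigr\},
\]
which is correct because $\mathcal{B}_{s+}^{\mathrm{a}}$, $\mathcal{B}_{s-}^{\mathrm{a}}$, $\mathcal{B}_{s*}$ are pairwise disjoint (they lie in mutually orthogonal subspaces), so $N(f,\cdot)$ is additive over their union; given the stored scalars each such step is $O(1)$, hence $O(|V|)$ for the whole loop, and it also records which alternative was chosen at each node. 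Summing $O(|V|)$ per level over the $O(\log|V|)$ levels, plus the $O(|V|)$ loop and an $O(|V|)$ walk to assemble $\mathcal{B}^{\mathrm{a}}=\mathcal{B}_\emptyset^{\mathrm{a}}$ from the recorded choices, gives total complexity $O(|V|\log|V|)$.

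For storage, run the top‑down sweep depth‑first: at any time only the coefficient vectors on the current root‑to‑leaf path are alive, and $\sum_{j}n_j\le\sum_{j} n/2^j=O(|V|)$. The recorded scalars $N(f,\mathcal{B}_s)$, $N(f,\mathcal{B}_{s*})$, $N(f,\mathcal{B}_s^{\mathrm{a}})$ and the per‑node choices number $O(|V|)$ since there are $O(|V|)$ nodes; the multilevel transforms $\{\pi_j Q_{\mathcal{A}_j}\}$ occupy $O(|V|)$ by the preceding theorem for Algorithm~\ref{alg2}; and $f$ itself is $O(|V|)$. Hence storage is $O(|V|)$. The one genuinely delicate point — and the step I expect to be the main obstacle — is the bookkeeping behind ``never recompute'': one has to check that the coefficients and sub‑optima demanded by the $\argmin$ at level $j$ are exactly those produced by the level‑$j$ step of the sweep together with the level‑$(j{+}1)$ optima, so that no quantity is touched twice and the per‑level geometric bound $\sum_{s}(\text{work at }s)=O(|V|)$ genuinely applies; once that is in place, both claims drop out of the two counting facts.
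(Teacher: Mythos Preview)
Your proof is correct and follows the same skeleton as the paper: $O(\log|V|)$ levels, $O(|V|)$ total work per level to form the coefficient sums $N(f,\cdot)$, and at most $2^{J+1}-1\le|V|$ booleans to record the $\argmin$ choices. Your version is considerably more explicit than the paper's one-paragraph argument --- in particular you spell out that the inner products $(f,\bm{q})$ are obtained by propagating the coefficient vector through the local block transforms rather than as full $O(|V|)$ dot products, and you account for the intermediate working storage via a depth-first sweep, neither of which the paper mentions --- but the underlying approach is the same.
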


\begin{proof}
  The depth of the hierarchy is at most $\log\lvert V\rvert$, within
  each hierarchy, a total number of $\lvert V\rvert$ inner products
  need to be computed for the terms $N(f,\mathcal{B}')$. Therefore the
  complexity of the algorithm is $O(\lvert V\rvert\log \lvert
  V\rvert)$. To store the basis chosen, we only need to store one
  boolean value for each element in $\{+,-\}^j$ ($j = 0, 1, \dots,
  J$): $1$ if $\mathcal{B}_s$ is chosen, or $0$ if
  $\mathcal{B}_{s+}^{\text{a}} \cup \mathcal{B}_{s+}^{\text{a}} \cup
  \mathcal{B}_{s*}$ is chosen. There are $2^{J+1} - 1$ such nodes, and
  we have $2^{J+1} \leq \lvert V\rvert$. This gives at most $\lvert
  V\rvert$ storage.
\end{proof}

\section{Approximation of Smooth Signals}

In this section, we analyze the decay of smooth signals in our frequency domain. In particular, let
$$|u|_{G,\infty} := \max_{(i,j) \in E} |u_i - u_j| .$$
Recall, the columns of $\mathcal{Q^*}$ form the orthogonal basis of our frequency domain. Let $q_i = \prod_{j=0}^J \mathcal{Q}^*_{j} e_i$. For each basis element, we can associate the label
$$\phi(i) := \max \{ j | n_j \ge i \}  .$$
This naturally gives rise to a partition
$$\Phi_j := \{ i | \phi(i) = j \}, \quad \bigcup_{j=0}^J  \Phi_j = [n], \quad  \Phi_{j_1} \cap \Phi_{j_2} = \emptyset, j_1 \ne j_2.$$

First, we give the following lemma regarding the approximation of smooth vectors by the constant vector $\c1 := (1,...,1)^T$.

\begin{lemma} \label{lm:aprx}
Let $u \in \R{m}$, $m>1$. If
$$| u_{i} - u_{i+1} | \le \alpha \quad \text{ for } i = 1,...,m-1,$$
then
$$\|u \|^2 - \frac{ \langle 1_m , u \rangle^2}{\langle 1_m, 1_m \rangle} \le  \frac{\alpha^2 m^3}{12}.$$
\end{lemma}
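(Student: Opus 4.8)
The quantity on the left-hand side is exactly the squared norm of the projection of $u$ onto the orthogonal complement of $\mathbf{1}_m$; that is, writing $\bar u = \langle \mathbf{1}_m, u\rangle / m$ for the mean of the entries, we have
\[
\|u\|^2 - \frac{\langle \mathbf{1}_m, u\rangle^2}{\langle \mathbf{1}_m, \mathbf{1}_m\rangle} = \sum_{i=1}^m (u_i - \bar u)^2 = \min_{c\in\mathbb{R}} \sum_{i=1}^m (u_i - c)^2 .
\]
So the plan is to bound $\sum_{i=1}^m (u_i - c)^2$ for a convenient choice of $c$, using only the hypothesis that consecutive entries differ by at most $\alpha$. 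Two natural choices are $c = \bar u$ itself, $c = u_1$, or the ``midpoint'' value $c = (u_1 + u_m)/2$; I expect the cleanest bound to come either from centering at the mean or from a telescoping argument.

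The key estimate is that, under the hypothesis, $|u_i - u_j| \le \alpha |i - j|$ for all $i,j$, by the triangle inequality along the chain of consecutive indices. Taking $c = \bar u$ and using convexity (or just $(u_i - \bar u)^2 \le \frac1m\sum_j (u_i - u_j)^2$, which follows from Jensen applied to $t\mapsto t^2$), I would write
\[
\sum_{i=1}^m (u_i - \bar u)^2 \;\le\; \frac{1}{m}\sum_{i=1}^m \sum_{j=1}^m (u_i - u_j)^2 \;\le\; \frac{\alpha^2}{m}\sum_{i=1}^m\sum_{j=1}^m (i-j)^2 .
\]
Then it remains to evaluate (or bound) the double sum $\sum_{i,j}(i-j)^2$. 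This is a standard identity: $\sum_{i,j=1}^m (i-j)^2 = 2m\sum_{i=1}^m i^2 - 2\left(\sum_{i=1}^m i\right)^2 = \frac{m^2(m^2-1)}{6}$. Substituting gives $\sum_i (u_i - \bar u)^2 \le \frac{\alpha^2 m(m^2-1)}{6} \le \frac{\alpha^2 m^3}{6}$.

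The main obstacle is that this straightforward approach gives the constant $\tfrac16$ rather than the claimed $\tfrac{1}{12}$ — it loses a factor of two because bounding $(u_i - \bar u)^2$ by the average of $(u_i - u_j)^2$ is wasteful (it ignores cancellation, i.e. that $u$ is close to an affine function of $i$, not an arbitrary function with that pairwise spread). To recover the sharp constant, I would instead subtract the best affine fit, or equivalently compare $u$ to the arithmetic progression $a_i = u_1 + (i-1)\beta$ with $\beta$ chosen so that $\sum(u_i - a_i)\cdot(\text{something})$ vanishes — but more simply: write $c$ as the mean and bound $(u_i - \bar u)$ by comparing the partial telescoping sums, which yields $\sum_i (u_i-\bar u)^2 \le \alpha^2 \sum_i (i - \tfrac{m+1}{2})^2 = \alpha^2 \cdot \frac{m(m^2-1)}{12} \le \frac{\alpha^2 m^3}{12}$. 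Concretely, the inequality $|u_i - \bar u| \le \alpha\,|i - \tfrac{m+1}{2}|$ fails in general, so the honest route is: let $d_i := u_{i+1}-u_i$, so $|d_i|\le\alpha$; express $u_i - \bar u$ as a weighted sum of the $d_i$ (namely $u_i - \bar u = \sum_{k} w_{ik} d_k$ with weights summing appropriately), and bound $\sum_i (u_i-\bar u)^2$ by its value when all $d_k = \alpha$, using that the quadratic form $\sum_i(\sum_k w_{ik}d_k)^2$ in the variables $d_k$ is maximized over the cube $[-\alpha,\alpha]^{m-1}$ at a vertex and, by a sign/monotonicity argument, at the all-$\alpha$ vertex where $u$ is the arithmetic progression of common difference $\alpha$. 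For that progression $\sum_i(u_i-\bar u)^2 = \alpha^2\,\frac{m(m^2-1)}{12} \le \frac{\alpha^2 m^3}{12}$, giving the claim. I expect verifying the vertex-maximization / sign argument cleanly to be the only non-routine step; everything else is the arithmetic-progression variance computation $\sum_{i=1}^m (i - \tfrac{m+1}{2})^2 = \frac{m(m^2-1)}{12}$.
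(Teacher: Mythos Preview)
Your final ``honest route'' is exactly the paper's argument: write $u$ in terms of its consecutive differences and bound the resulting quadratic form in those differences over the cube $[-\alpha,\alpha]^{m-1}$. The paper does this by expanding $u=\sum_{i=1}^m \alpha_i \hat{1}_i$ with $\hat{1}_i(j)=\mathbf{1}_{\{j\le i\}}$, so that $\alpha_i=u_i-u_{i+1}$ for $i<m$, and computing
\[
\|u\|^2-\frac{\langle \mathbf{1}_m,u\rangle^2}{m}
=\frac{1}{m}\left[\sum_{i=1}^{m-1}(m-i)\,i\,\alpha_i^2+2\sum_{1\le i<j\le m-1}(m-j)\,i\,\alpha_i\alpha_j\right].
\]
This makes the step you flagged as ``the only non-routine step'' routine: every coefficient $(m-i)i$ and $(m-j)i$ is nonnegative, so each term is bounded above by replacing $\alpha_i\alpha_j$ with $\alpha^2$. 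No vertex or monotonicity argument is needed beyond this termwise bound. The resulting sum evaluates to $\frac{\alpha^2}{12}(m-1)m(m+1)\le \frac{\alpha^2 m^3}{12}$, which is precisely your arithmetic-progression variance computation.

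So your proposal is correct and essentially the same as the paper's; the only thing to tighten is to replace the vague ``sign/monotonicity argument'' with the explicit observation that the quadratic form in the differences has all nonnegative coefficients (equivalently, your weights satisfy $\sum_i w_{ik}w_{i\ell}=\frac{\min(k,\ell)(m-\max(k,\ell))}{m}\ge 0$). Your initial detour through the $\tfrac{1}{6}$ bound can be dropped.
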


\begin{proof}
Let
$$ u = \sum_{i=1}^m \alpha_i \hat 1_i, \qquad \hat 1_i(j) := \left\{ \begin{matrix} 1 \quad j \le i \\ 0 \quad j>i \end{matrix}   \right.$$
Then
\begin{align*}
\|u \|^2 - \frac{ \langle 1_m , u \rangle^2}{\langle 1_m, 1_m \rangle} &= \left\langle  \sum_{i=1}^m \alpha_i \hat 1_i,  \sum_{i=1}^m \alpha_i \hat 1_i \right\rangle - \frac{1}{m} \left \langle 1_m, \sum_{i=1}^m \alpha_i \hat 1_i \right \rangle^2  \\
&= \sum_{i=1}^m i \alpha_i^2 + 2 \sum_{i=1}^{m-1} \sum_{j=i+1}^{m} i \alpha_i \alpha_j - \frac{1}{m} \left[ \sum_{i=1}^m i^2 \alpha_i^2 + 2 \sum_{i=1}^{m-1} \sum_{j=i+1}^{m} ij \alpha_i \alpha_j \right] \\
&= \frac{1}{m} \left[ \sum_{i=1}^m (m-i)i \alpha_i^2 + 2 \sum_{i=1}^{m-1} \sum_{j=i+1}^{m} (m-j)i \alpha_i \alpha_j \right] \\
&= \frac{1}{m}  \left[ \sum_{i=1}^{m-1} (m-i)i \alpha_i^2 + 2 \sum_{i=1}^{m-2} \sum_{j=i+1}^{m-1} (m-j)i \alpha_i \alpha_j \right]. \\
\end{align*} 
Noting that we have $|\alpha_i| \le \alpha $, $i = 1,...,m-1$, the result quickly follows:
\begin{align*}
\|u \|^2 - \frac{ \langle 1_m , u \rangle^2}{\langle 1_m, 1_m \rangle}  &\le 
\frac{\alpha^2}{m}  \left[ \sum_{i=1}^{m-1} (m-i)i  + 2 \sum_{i=1}^{m-2} \sum_{j=i+1}^{m-1} (m-j)i \right] \\
&= \frac{\alpha^2}{m} \left[ \frac{1}{6}(m-1)m(m+1) + \frac{1}{12}(m-2)(m-1)m(m+1) \right] \\
&= \frac{\alpha^2}{12}(m-1)m(m+1) \le \frac{\alpha^2 m^3}{12}.
\end{align*}
\end{proof}

We have the following result regarding the expansion of a smooth signal in the basis $\{q_i\}_{i=1}^n$.

\begin{lemma} \label{lm:exp}
$$\sum_{i \in \Phi_k} \langle u ,  q_i \rangle^2 \le\frac{n}{12} \bigg[\frac{C_{\mathcal{A}}^3}{2}\bigg]^{k+1}  |u|^2_{G,\infty}.$$
\end{lemma}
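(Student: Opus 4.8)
The plan is to reduce the left-hand side, via the unitarity built into the construction, to a sum of local quantities to which Lemma~\ref{lm:aprx} applies. Let $w^{(0)}:=u$, and for $j\ge 1$ let $w^{(j)}$ be the coarsened signal on $G_{\mathcal{A}_{j-1}}$ whose value on an aggregate $a\in\mathcal{A}_{j-1}$ is the leading eigencoefficient $[w^{(j)}]_a=\frac{1}{\sqrt{|a|}}\sum_{v\in a}[w^{(j-1)}]_v$ (the inner product of $[w^{(j-1)}]_a$ with the constant eigenvector $\mathbf{1}_{|a|}/\sqrt{|a|}$ of $L(G[a])$). Each $\mathcal{Q}_j$ is, up to a permutation, block diagonal with the unitary blocks $Q_a$, whose first column is $\mathbf{1}_{|a|}/\sqrt{|a|}$; tracing through the definition of $q_i$ and of $\Phi_k$, the coefficients $\langle u,q_i\rangle$ with $i\in\Phi_k$ are precisely the components of $[w^{(k)}]_a$, $a\in\mathcal{A}_k$, along the non-constant eigenvectors of $L(G[a])$. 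Hence
\[
\sum_{i\in\Phi_k}\langle u,q_i\rangle^2=\sum_{a\in\mathcal{A}_k}\Big(\big\|[w^{(k)}]_a\big\|^2-\tfrac{1}{|a|}\big\langle\mathbf{1}_{|a|},[w^{(k)}]_a\big\rangle^2\Big),
\]
and the proof rests on three estimates: a bound on each summand, the geometric decay of $|\mathcal{A}_k|$, and -- the crux -- control of the smoothness of $w^{(k)}$.

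\textbf{The two easy ingredients.} For the summands, fix $a\in\mathcal{A}_k$ and set $\alpha_k:=\big|w^{(k)}\big|_{G_{\mathcal{A}_{k-1}},\infty}$, so that $\alpha_0=|u|_{G,\infty}$. Since $G[a]$ is connected and $|a|\le C_{\mathcal{A}}$, order the vertices of $a$ so that consecutive ones are adjacent in $G[a]$ -- this is automatic for $|a|\le 3$, the overwhelmingly typical case by Theorem~\ref{thm:er} and by inspection of Algorithm~\ref{alg1}, and in general one can instead use the identity $\|v\|^2-\tfrac1m\langle\mathbf{1}_m,v\rangle^2=\tfrac1m\sum_{p<q}(v_p-v_q)^2$ together with the fact that a path maximizes $\sum_{p<q}d(p,q)^2$ among connected graphs on $m$ vertices. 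Consecutive differences of $[w^{(k)}]_a$ are then at most $\alpha_k$, and Lemma~\ref{lm:aprx} gives $\big\|[w^{(k)}]_a\big\|^2-\tfrac1{|a|}\langle\mathbf{1}_{|a|},[w^{(k)}]_a\rangle^2\le\tfrac{|a|^3}{12}\alpha_k^2\le\tfrac{C_{\mathcal{A}}^3}{12}\alpha_k^2$. For the count, every aggregate has order at least two, so $n_{j+1}=|\mathcal{A}_j|\le n_j/2$ and $|\mathcal{A}_k|=n_{k+1}\le n/2^{k+1}$.

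\textbf{The hard ingredient.} The remaining, and I expect hardest, step is to show $\alpha_k\le C_{\mathcal{A}}^{3k/2}|u|_{G,\infty}$; I would prove this by induction, via the one-step bound $\alpha_{k+1}\le C_{\mathcal{A}}^{3/2}\alpha_k$. Given an edge $(a,b)\in E_{\mathcal{A}_k}$, choose $v_0\in a$ and $w_0\in b$ with $(v_0,w_0)\in E_{\mathcal{A}_{k-1}}$. Using $\big|[w^{(k)}]_v-[w^{(k)}]_{v_0}\big|\le d_{G_{\mathcal{A}_{k-1}}}(v,v_0)\,\alpha_k$ for $v\in a$ and the analogous bound on $b$ (routed through the edge $(v_0,w_0)$), together with $\sum_{v\in a}d(v,v_0)\le\binom{|a|}{2}$, one obtains, when $|a|=|b|=m$,
\[
\big|[w^{(k+1)}]_a-[w^{(k+1)}]_b\big|=\tfrac1{\sqrt m}\Big|\textstyle\sum_{v\in a}[w^{(k)}]_v-\sum_{w\in b}[w^{(k)}]_w\Big|\le\tfrac1{\sqrt m}\big(\tbinom m2+\tbinom m2+m\big)\alpha_k=m^{3/2}\alpha_k\le C_{\mathcal{A}}^{3/2}\alpha_k.
\]
The subtle point is the $1/\sqrt{|a|}$ normalization when aggregates of different sizes are adjacent, which makes $\big|[w^{(k+1)}]_a-[w^{(k+1)}]_b\big|$ a priori sensitive to the absolute scale of $w^{(k)}$; I would handle this by a more careful bookkeeping of the reference value, noting that it disappears entirely in the typical regime where all aggregates have order two (there $\alpha_{k+1}\le\tfrac{1}{\sqrt2}(\alpha_k+3\alpha_k)=2\sqrt2\,\alpha_k=C_{\mathcal{A}}^{3/2}\alpha_k$ directly).

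\textbf{Assembly.} Combining the three estimates,
\[
\sum_{i\in\Phi_k}\langle u,q_i\rangle^2\le|\mathcal{A}_k|\cdot\tfrac{C_{\mathcal{A}}^3}{12}\alpha_k^2\le\tfrac{n}{2^{k+1}}\cdot\tfrac{C_{\mathcal{A}}^3}{12}\cdot C_{\mathcal{A}}^{3k}|u|_{G,\infty}^2=\tfrac{n}{12}\Big[\tfrac{C_{\mathcal{A}}^3}{2}\Big]^{k+1}|u|_{G,\infty}^2,
\]
which is the asserted bound.
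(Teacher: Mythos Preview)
Your proposal is correct and follows essentially the same route as the paper: identify $\sum_{i\in\Phi_k}\langle u,q_i\rangle^2$ with the local approximation errors on the level-$k$ aggregates, bound each via Lemma~\ref{lm:aprx}, use $|\mathcal{A}_k|\le n/2^{k+1}$, and propagate the smoothness seminorm through the hierarchy by the one-step estimate $\alpha_{k+1}\le C_{\mathcal{A}}^{3/2}\alpha_k$. The paper's proof carries out exactly these steps (treating $\Phi_0$ first and then proceeding by induction on $k$); you are in fact more explicit than the paper about the different-size-aggregate subtlety in the smoothness propagation, which the paper's displayed bound handles only implicitly.
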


\begin{proof}
First, we will treat $\Phi_0$. The elements of $\Phi_0$ are precisely those which correspond to the sets $V_{-}$ and $V_*$ of $G$. We will bound the sum of the quantities $[\mathcal{Q}_0 u](i)^2$, $i \in \Phi_0$. By Lemma \ref{lm:aprx}, we have
\begin{align*}
\sum_{i \in \Phi_0} [\mathcal{Q}_0 u](i)^2 &= \sum_{a_\ell \in \mathcal{A}_0} ||u_{a_\ell} ||^2 - \frac{ \langle 1_{|a_\ell|} , u_{a_\ell} \rangle }{\langle 1_{|a_\ell|} ,1_{|a_\ell|} \rangle } \\
&\le \sum_{a_\ell \in \mathcal{A}_{0}} \frac{ |u|^2_{G,\infty} | a_\ell|^3}{12} \\
&\le \frac{C_{\mathcal{A}}^3}{24}  n |u|^2_{G,\infty} .
\end{align*}

In addition, because each $\mathcal{Q}_j$ is unitary and is block diagonal with respect to the partition $\Phi_0$ and $[n] \backslash \Phi_0$, this implies that
$$\sum_{i \in \Phi_0} \langle u ,  q_i \rangle^2 \le \frac{C_{\mathcal{A}}^3}{24} n |u|^2_{G,\infty}.$$

The vector $u_+$ satisfies
$$|u_+|_{G_{\mathcal{A}_0}, \infty} \le |u|_{G,\infty} +  \frac{2}{\sqrt{C_{\mathcal{A}}}}  \sum_{i = 1}^{C_{\mathcal{A}} -1} |u|_{G,\infty} i \le C_{\mathcal{A}}^{3/2} |u|_{G,\infty}.$$

Now, proceeding by induction, for $\Phi_k$ we will repeat the same procedure, except we will have
$$|v_+|_{G_{\mathcal{A}_{k-1}}, \infty} \le C_{\mathcal{A}}^{3k/2} |u|_{G,\infty}.$$
Using Lemma \ref{lm:aprx} and noting that $|\mathcal{A}_k| \le 2^{-(k+1)} n$, we obtain the bound
\begin{align*}
\sum_{i \in \Phi_k} \langle u ,  q_i \rangle^2 &\le  | \mathcal{A}_k|  \bigg[ \frac{C_{\mathcal{A}}^{3k} |u|^2_{G,\infty} C_{\mathcal{A}}^3}{12}  \bigg] \le \frac{n}{12} \bigg[\frac{C_{\mathcal{A}}^3}{2}\bigg]^{k+1} |u|^2_{G,\infty}.
\end{align*}
\end{proof}

Based on Lemma \ref{lm:exp}, we can now give theoretical bounds on the quality of the $k$-term approximation $\widehat{u}_{\sigma_k}$, which only depends on the smoothness of the original signal and the quality of aggregation produced. We have the following theorem.

\begin{theorem}
$$\| u- \mathcal{Q}^* \widehat {u_{\sigma_k}}\|^2 \le \frac{C^3_{\mathcal{A}}n}{12} \bigg[ \frac{n}{k}\bigg]^{\log_2 C^3_{\mathcal{A}} -1}  |u|^2_{G,\infty}.$$
\end{theorem}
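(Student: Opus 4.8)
The plan is to control the squared error of the best $k$-term approximation by summing the contributions of the levels $\Phi_k$ from Lemma~\ref{lm:exp}, after choosing a cutoff level $k_0$ past which all coefficients are discarded. Since $\widehat{u}_{\sigma_k}$ retains the $k$ largest coefficients of $\widehat u = \mathcal{Q}u$ in absolute value, its error is at most the error of \emph{any} other scheme that discards $n-k$ coefficients; in particular, I would compare against the scheme that keeps exactly the levels $\Phi_0,\dots,\Phi_{k_0-1}$ and drops $\Phi_{k_0},\dots,\Phi_J$. First I would observe that $|\Phi_j| = n_j - n_{j+1} \le n_j \le 2^{-j}n$, so the total number of coefficients in levels $0$ through $k_0-1$ is at most $\sum_{j=0}^{k_0-1} 2^{-j} n \le 2n$; more to the point, the number of \emph{discarded} coefficients in that comparison scheme is $\sum_{j \ge k_0} |\Phi_j| \le \sum_{j \ge k_0} 2^{-j} n = 2^{-(k_0-1)} n$. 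So as soon as $2^{-(k_0-1)} n \le n-k$, equivalently roughly $2^{k_0} \gtrsim n/(n-k)$, the best $k$-term approximation does at least as well, and its squared error is bounded by $\sum_{j \ge k_0} \sum_{i \in \Phi_j}\langle u, q_i\rangle^2$.

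Next I would plug in Lemma~\ref{lm:exp} and sum the geometric-type series
\[
\|u - \mathcal{Q}^*\widehat{u}_{\sigma_k}\|^2 \;\le\; \sum_{j \ge k_0} \frac{n}{12}\Big[\frac{C_{\mathcal{A}}^3}{2}\Big]^{j+1} |u|_{G,\infty}^2 .
\]
This is the delicate point: the ratio $C_{\mathcal{A}}^3/2$ is in general $\ge 1$ (already $C_{\mathcal{A}}=2$ gives ratio $4$), so the tail does not converge by itself — it is \emph{dominated by its last term}, and there is no last term unless we truncate at $k_0$. Here is where the tradeoff in choosing $k_0$ bites: taking $k_0$ too small leaves too many coefficients to compare against (violating the $2^{-(k_0-1)}n \le n-k$ condition), while taking $k_0$ too large makes the bound $(C_{\mathcal{A}}^3/2)^{k_0}$ blow up. I would therefore pick $k_0$ as small as the discard-count constraint allows, namely $k_0 \approx \log_2\!\big(n/(n-k)\big)$, and bound the truncated sum by a constant multiple of its largest term, $\frac{n}{12}\big(C_{\mathcal{A}}^3/2\big)^{k_0+1}|u|_{G,\infty}^2$ (absorbing the geometric factor $\frac{1}{1 - 2/C_{\mathcal{A}}^3}$ and similar $O(1)$ constants, together with a $C_{\mathcal{A}}^3$ from shifting $j+1 \to k_0$, into the leading $C_{\mathcal{A}}^3$; in the clean statement these constants are suppressed).

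Finally I would convert the exponent back to the stated form. With $2^{k_0} \approx n/(n-k)$ we get $(C_{\mathcal{A}}^3/2)^{k_0} = 2^{k_0(\log_2 C_{\mathcal{A}}^3 - 1)} = (n/(n-k))^{\log_2 C_{\mathcal{A}}^3 - 1}$, which is exactly the claimed $[n/k]^{\log_2 C_{\mathcal{A}}^3 - 1}$ factor once one reads $k$ in the statement as the number of discarded coefficients $n-k$ (or, equivalently, in the regime $k \le n/2$ one uses $n/(n-k) \le 2n/n$... rather, one uses the monotone relabeling that makes $n-k \asymp k$ in the relevant range). The main obstacle, then, is not any one estimate but getting the bookkeeping of the cutoff $k_0$ exactly right: the bound must be nontrivial (it is only useful when $\log_2 C_{\mathcal{A}}^3 < 2$, i.e.\ $C_{\mathcal{A}} < 2^{2/3}$... again depending on how one accounts for the comparison), and one must verify that the comparison scheme keeping levels $0,\dots,k_0-1$ retains no more than $k$ coefficients, so that it is admissible in the optimization defining $\sigma_k$. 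Everything else — the per-level bound, the geometric sum dominated by its last term, the exponent algebra — is routine once $k_0$ is fixed.
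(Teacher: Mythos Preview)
Your comparison scheme is inverted, and this breaks the argument. The levels $\Phi_j$ are indexed so that small $j$ corresponds to the \emph{fine}-scale, high-frequency coefficients (the $V_-$ and $V_*$ parts at level $j$), and Lemma~\ref{lm:exp} gives the \emph{smallest} bound for $\Phi_0$ and a bound that grows geometrically with $j$. Consequently, the right comparison scheme \emph{keeps} the coarse levels $\Phi_{k_0},\ldots,\Phi_J$ (which together contain $n_{k_0}\le 2^{-k_0}n$ indices) and \emph{discards} $\Phi_0,\ldots,\Phi_{k_0-1}$. With $k_0=\lceil\log_2(n/k)\rceil$ one has $n_{k_0}\le k$, so this scheme retains at most $k$ terms and is admissible; its error is
\[
\sum_{j=0}^{k_0-1}\sum_{i\in\Phi_j}\langle u,q_i\rangle^2
\;\le\;\sum_{j=0}^{k_0-1}\frac{n}{12}\Big[\frac{C_{\mathcal A}^3}{2}\Big]^{j+1}|u|_{G,\infty}^2,
\]
a finite geometric sum with ratio $\ge 2$, hence dominated by its last term $j=k_0-1$. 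Substituting $k_0\le\log_2(n/k)+1$ then gives exactly the stated bound.

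Your scheme does the opposite: keeping $\Phi_0,\ldots,\Phi_{k_0-1}$ and dropping $\Phi_{k_0},\ldots,\Phi_J$. Two things fail. First, the admissibility check requires that the \emph{kept} set has size $\le k$, i.e.\ $n-n_{k_0}\le k$, which needs a \emph{lower} bound on $n_{k_0}$; your upper bound $n_{k_0}\le 2^{-k_0}n$ is on the wrong side. Second, and fatally, your error sum $\sum_{j\ge k_0}\frac{n}{12}(C_{\mathcal A}^3/2)^{j+1}|u|_{G,\infty}^2$ is dominated by its \emph{last} term $j=J\approx\log_2 n$, not by the $j=k_0$ term, so the bound you obtain is $\asymp\frac{n}{12}(C_{\mathcal A}^3/2)^{J+1}|u|_{G,\infty}^2\asymp n^{\log_2 C_{\mathcal A}^3}|u|_{G,\infty}^2$, with no dependence on $k$ at all. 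This is precisely the statement evaluated at $k=1$, i.e.\ the trivial bound. Your closing attempt to salvage the exponent by reinterpreting $k$ as $n-k$ is a symptom of this inversion, not a fix.
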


\begin{proof}
\begin{align*}
\| u- \mathcal{Q}^* \widehat {u_{\sigma_k}}\|^2 &\le \sum_{i=0}^{\lceil \log_2 \frac{n}{k} \rceil -1} \frac{n}{12} \bigg[\frac{C_{\mathcal{A}}^3}{2}\bigg]^{i+1} |u|^2_{G,\infty} \\
&\le \frac{n}{6}  \bigg[\frac{C_{\mathcal{A}}^3}{2}\bigg]^{\lceil \log_2 \frac{n}{k} \rceil } |u|^2_{G,\infty} \\
&\le \frac{n}{6}  \bigg[\frac{C_{\mathcal{A}}^3}{2}\bigg]^{ \log_2 \frac{n}{k} +1 } |u|^2_{G,\infty} \\
&= \frac{C^3_{\mathcal{A}}n}{12}  \bigg[\frac{C_{\mathcal{A}}^3}{2}\bigg]^{\log_2 \frac{n}{k} } |u|^2_{G,\infty} \\
&=\frac{C^3_{\mathcal{A}}n}{12} \bigg[ \frac{n}{k}\bigg]^{\log_2 C^3_{\mathcal{A}} -1}  |u|^2_{G,\infty}.
\end{align*}
\end{proof}

This proves that for sufficiently smooth signals and perfect matchings, the decay in the coefficients of our frequency domain is of order at least $k^{-2}$.

\section{Results and Discussion} \label{sec:num}

We now consider how the complexity, storage costs and performance of
Algorithm~\ref{alg2} for several examples.

We note that spectral-based techniques require at least $O( k
|E|\log^{1/2} | V | )$ computations and $O(k |V| )$ storage to compute
a $k$-dimensional low-frequency subspace, and $O(|V| |E| \log^{1/2}
|V|)$ computations and $O(|V|^2)$ storage to create a full frequency
domain \cite{Cohen:2014:SSL:2591796.2591833}. The latter procedure is
untenable for even moderately large graphs. Algorithm \ref{alg2} is
superior in complexity and storage for any choice of $k = \omega (\log^{1/2}
|V|)$, and it produces an entire frequency domain rather than a
$k$-dimensional subspace. See Table \ref{tbl1}.

In addition, Algorithm~\ref{alg2} is applicable to
low-pass data compression. For a given sparse network $G = (V,E)$, $n
= |V|$, $|E| = |V| \log |V|$, suppose we have $m$ smooth signals $\{
u^{(i)} \}_{i = 1}^m \subset \R{n}$, with $m = n^\alpha$, $0 < \alpha
< 1$. The current storage requirements for the network and the data
combined are $O(|V|^{1+\alpha})$. Suppose we wish to compress this
data so that the overall storage requirements are $O(|E|)$. Using a
$k$-frequency low-pass filter for each $u^{(i)}$, the $m$ smooth
signals can be compressed and stored using $O(k |V|^\alpha )$ memory,
not including the frequency domain and network storage. Using a
spectral approach, we can only perform $O( \log |V| )$-frequency
low-pass filtering. On the other hand, using Algorithm \ref{alg2}, we
can compute the full frequency domain with only $O(|V|)$
storage. Because the network is sparse, we can safely perform $|V|^{1-
	\alpha}$-frequency low-pass filtering and still have storage
requirements for the signals that are less than that of the
graph. Therefore, although the quality of low-pass spectral
frequencies is slightly superior to those of Algorithm \ref{alg2}, the
large number of frequencies that Algorithm \ref{alg2} provides results
in improved and more robust compression.

 \begin{table}[]
\caption{Complexity and Storage Costs of Algorithm \ref{alg2} and
  $k$-Frequency Spectral Subspace}
\begin{center}
\begin{tabular}{ l | l | l }
Costs & $k$-Spectral &  Algorithm \ref{alg2}  \\ \hline
 Make Frequency Domain & $O(k |E| \log^{1/2} |V|)$ & $O(|E| \log |V|)$ \\
Map to/from Domain & $O(k |V|)$ &  $O(|V|)$ \\ 
Store Frequency Domain & $O(k |V|)$ & $O(|V|)$ \\
\end{tabular}
\end{center}
\label{tbl1}
\end{table}

To investigate the performance of Algorithm \ref{alg2} and \ref{alg3} we consider two
simulations on real-world networks.

\subsection{Compression of monthly precipitation data}
The first example is on compressing the monthly precipitation data
over Punjab, India in the rectangle
$[29^{\circ}N, 33^{\circ}N]\times [73.25^{\circ}E,77.5^{\circ}E]$. The
original data is on $(512\times 512)$ grid. We compress the data using
our compression algorithm and show how the recovered data is like
compared to the original plot (Figure \ref{fig:punjab}).  For
compressing the precipitation data, lattice graph of size
$512\times512$ is used. Figure \ref{fig:punjab} shows the plotted data
over the 12 months of an year. For each month, the first plot is a
plot of the original data, while the second plot is data recovered
after compression using Algorithm \ref{alg2}. The third (most right)
image shows the data recovered from the adaptive compression algorithm
described in \S\ref{nonlinear-alg2}. The filtering threshold for
compression is chosen to be 10,000 (out of about 260,000). In Figure
\ref{fig:punjab-levels} we plot the relative error of the compression
for different filtering threshold $k$.

\newcommand{\plotmonth}[2]{
  \begin{subfigure}{.45\textwidth}
    \centering
    \includegraphics[width=0.3\textwidth]{agp1980_#1}~
    \includegraphics[width=0.3\textwidth]{agp1980_#1_1}~
    \includegraphics[width=0.3\textwidth]{agp1980_#1_2}
    \caption{#2}
  \end{subfigure}
}
\begin{figure}
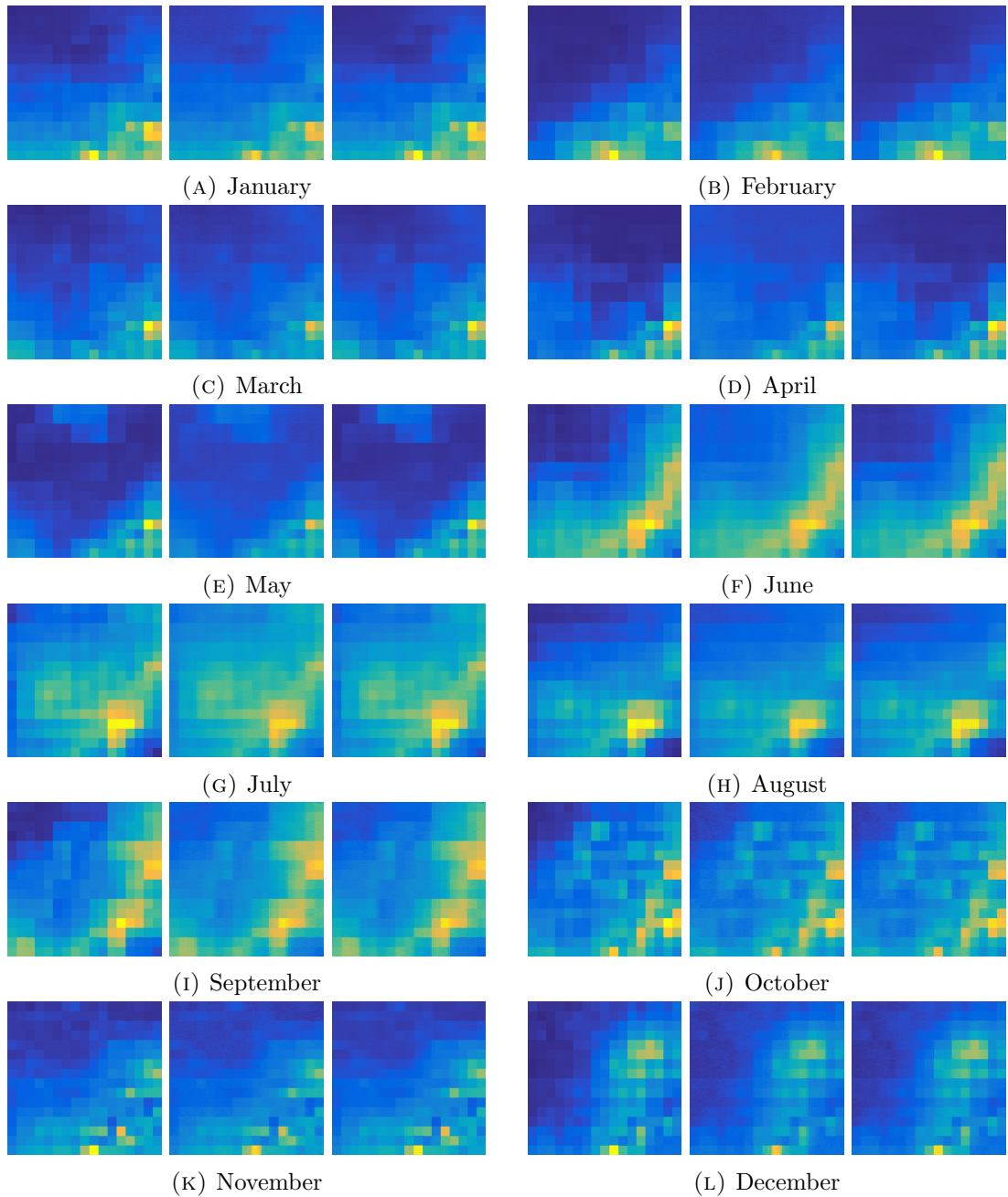

  \centering
  \plotmonth{01}{January}~\plotmonth{02}{February}\par
  \plotmonth{03}{March}~\plotmonth{04}{April}\par
  \plotmonth{05}{May}~\plotmonth{06}{June}\par
  \plotmonth{07}{July}~\plotmonth{08}{August}\par
  \plotmonth{09}{September}~\plotmonth{10}{October}\par
  \plotmonth{11}{November}~\plotmonth{12}{December}\par
  \caption{Precipitation data over 12 months from Punjab, India, in
    year 1980. The data were compressed and decompressed using both
    the original and adaptive compression algorithms.}
  \label{fig:punjab}
\end{figure}

\newcommand{\plotmonthlevels}[2]{
  \begin{subfigure}{.31\textwidth}
    \centering
    \includegraphics[width=\textwidth]{agp1980_#1_levels}
    \caption{#2}
  \end{subfigure}
}
\begin{figure}
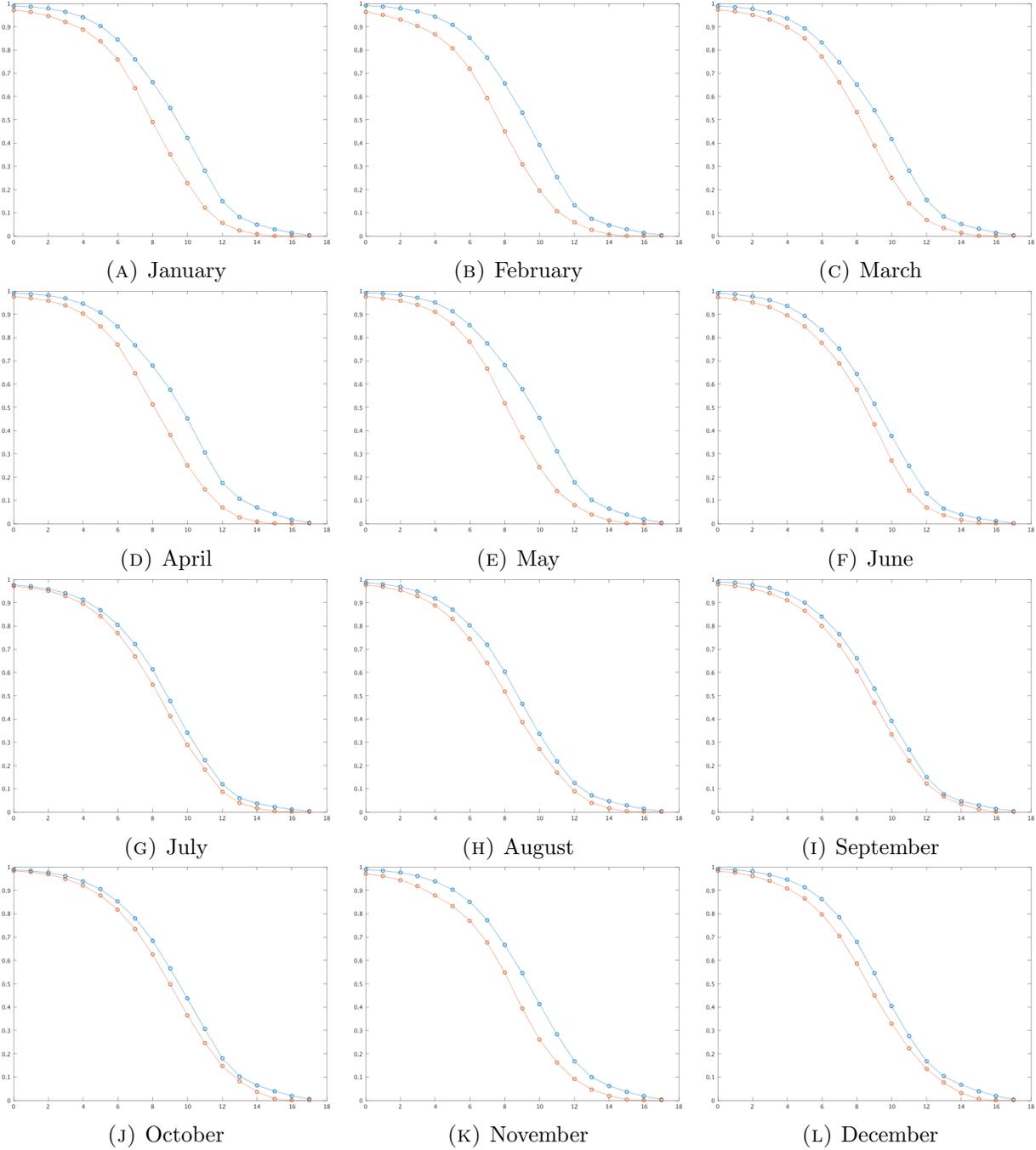

  \centering
  \plotmonthlevels{01}{January}
  \plotmonthlevels{02}{February}
  \plotmonthlevels{03}{March}\par
  \plotmonthlevels{04}{April}
  \plotmonthlevels{05}{May}
  \plotmonthlevels{06}{June}\par
  \plotmonthlevels{07}{July}
  \plotmonthlevels{08}{August}
  \plotmonthlevels{09}{September}\par
  \plotmonthlevels{10}{October}
  \plotmonthlevels{11}{November}
  \plotmonthlevels{12}{December}\par
  \caption{Plots of relative error $\lVert u-v \rVert/\lVert u\rVert$
    on the Punjab precipitation data. The horizontal axis is
    $\log_2 k$, $k$ being the filtering threshold. For each plot,
    blue line is Algorithm \ref{alg2}, red line is adaptive
    compression.}
  \label{fig:punjab-levels}
\end{figure}

\subsection{Low pass filtering on average annual precipitation}
Another example, is on low-pass filtering on average annual
precipitation data for the contiguous United States in Figure
\ref{fig:states}.  The spectral filter is mildly, but not notably
superior to Algorithm \ref{alg2}.
\begin{figure}[]
\center
\begin{subfigure}{0.3\textwidth}\caption{Precipitation Data}{\includegraphics[width = 1.6 in,height = 1.5 in]{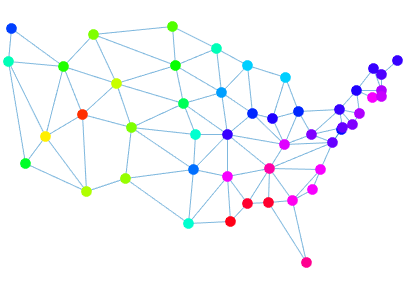}} \quad
\end{subfigure}\begin{subfigure}{0.3\textwidth}\caption{Spectral Filter}{\includegraphics[width = 1.6 in,height = 1.5 in]{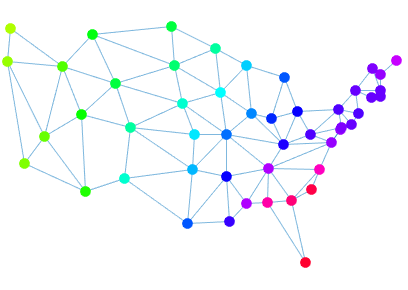}} \quad
\end{subfigure}
\begin{subfigure}{0.3\textwidth}\caption{Algorithm \ref{alg2} Filter}{\includegraphics[width = 1.6 in,height = 1.5 in]{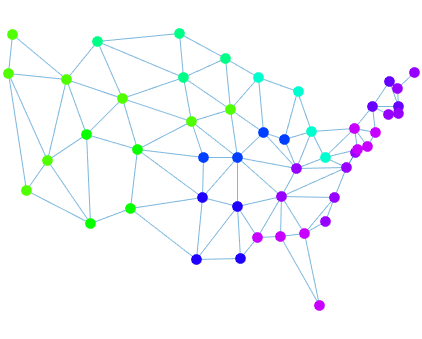}}
\end{subfigure}
  \caption{Average annual precipitation for states in the
contiguous United States, in Figure (a). Spectral filter of
precipitation data, using the three minimal, nontrivial
eigenvectors, in Figure (b). Algorithm \ref{alg2} filter, using
three component compression, in Figure (c). For both filters,
signal was pre-processed with centering and normalization. Source
\cite{usdata}.}  \label{fig:states} \end{figure}

\section*{Acknowledgments}
The work of W.~Xu and L.~Zikatanov was supported in part by NSF through grants
DMS-1522615 and DMS-1720114.  The authors are grateful to Louisa
Thomas for greatly improving the style of presentation. The authors
also thank Madeline Nyblade and Dr. Tess Russo for the invaluable help
with the monthly precipitation data in Punjab, India.

%\bibliographystyle{siam}
%\bibliography{../Submit_JCM/bib_gdft.bib}

\begin{thebibliography}{10}

\bibitem{usdata}
{\sc I.~D. A.~Arguez and S.~Applequist}, {\em {NOAA}'s {U}.{S}. climate normals
  (1981-2010)}, 2010.

\bibitem{Battista:1998:GDA:551884}
{\sc G.~D. Battista, P.~Eades, R.~Tamassia, and I.~G. Tollis}, {\em Graph
  Drawing: Algorithms for the Visualization of Graphs}, Prentice Hall PTR,
  Upper Saddle River, NJ, USA, 1st~ed., 1998.

\bibitem{Belkin:2003:LED:795523.795528}
{\sc M.~Belkin and P.~Niyogi}, {\em {L}aplacian eigenmaps for dimensionality
  reduction and data representation}, Neural Comput., 15 (2003),
  pp.~1373--1396.

\bibitem{bell1997transportation}
{\sc M.~G.~H. Bell and Y.~Lida}, {\em Transportation network analysis}, 1997.

\bibitem{Bu03topologicalstructure}
{\sc D.~Bu, Y.~Zhao, L.~Cai, H.~Xue, X.~Zhu, H.~Lu, J.~Zhang, S.~Sun, L.~Ling,
  N.~Zhang, G.~Li, and R.~Chen}, {\em Topological structure analysis of the
  protein–protein interaction network in budding yeast}, Nucleic Acids
  Research, 31 (2003), pp.~2443--2450.

\bibitem{cite-key}
{\sc E.~Bullmore and O.~Sporns}, {\em Complex brain networks: {G}raph
  theoretical analysis of structural and functional systems}, Nat Rev Neurosci,
  10 (2009), pp.~186--198.

\bibitem{chen2015discrete}
{\sc S.~Chen, R.~Varma, A.~Sandryhaila, and J.~Kova{\v{c}}evi{\'c}}, {\em
  Discrete signal processing on graphs: Sampling theory<? pub \_newline?}, IEEE
  transactions on signal processing, 63 (2015), pp.~6510--6523.

\bibitem{Cohen:2014:SSL:2591796.2591833}
{\sc M.~B. Cohen, R.~Kyng, G.~L. Miller, J.~W. Pachocki, R.~Peng, A.~B. Rao,
  and S.~C. Xu}, {\em Solving sdd linear systems in nearly mlog1/2n time}, in
  Proceedings of the 46th Annual ACM Symposium on Theory of Computing, STOC
  '14, New York, NY, USA, 2014, ACM, pp.~343--352.

\bibitem{Coifman20065}
{\sc R.~R. Coifman and S.~Lafon}, {\em Diffusion maps}, Applied and
  Computational Harmonic Analysis, 21 (2006), pp.~5 -- 30.
\newblock Special Issue: Diffusion Maps and Wavelets.

\bibitem{shuman_SPM_2013}
{\sc {D. I Shuman}, S.~K. Narang, P.~Frossard, A.~Ortega, and
  P.~Vandergheynst}, {\em The emerging field of signal processing on graphs:
  {Extending} high-dimensional data analysis to networks and other irregular
  domains}, IEEE Signal Processing Magazine, 30 (2013), pp.~83--98.

\bibitem{shuman_ACHA_2013}
{\sc {D. I Shuman}, B.~Ricaud, and P.~Vandergheynst}, {\em Vertex-frequency
  analysis on graphs}, submitted to Applied and Computational Harmonic
  Analysis,  (2013).

\bibitem{shuman_TSP_2013}
{\sc {D. I. Shuman}, C.~Wiesmeyr, N.~Holighaus, and P.~Vandergheynst}, {\em
  Spectrum-adapted tight graph wavelet and vertex-frequency frames}, submitted
  to IEEE Transactions on Signal Processing,  (2013).

\bibitem{Davis:2011:UFS:2049662.2049663}
{\sc T.~A. Davis and Y.~Hu}, {\em The {U}niversity of {F}lorida sparse matrix
  collection}, ACM Trans. Math. Softw., 38 (2011), pp.~1:1--1:25.

\bibitem{devore1998}
{\sc R.~A. DeVore}, {\em Nonlinear approximation}, in Acta numerica, 1998,
  vol.~7 of Acta Numer., Cambridge Univ. Press, Cambridge, 1998, pp.~51--150.

\bibitem{duan2010approximating}
{\sc R.~Duan and S.~Pettie}, {\em Approximating maximum weight matching in
  near-linear time}, in Foundations of Computer Science (FOCS), 2010 51st
  Annual IEEE Symposium on, IEEE, 2010, pp.~673--682.

\bibitem{erdos1966existence}
{\sc P.~Erdos and A.~R{\'e}nyi}, {\em On the existence of a factor of degree
  one of a connected random graph}, Acta Math. Acad. Sci. Hungar, 17 (1966),
  pp.~359--368.

\bibitem{gavish2010multiscale}
{\sc M.~Gavish, B.~Nadler, and R.~R. Coifman}, {\em Multiscale wavelets on
  trees, graphs and high dimensional data: Theory and applications to semi
  supervised learning}, in Proceedings of the 27th International Conference on
  Machine Learning (ICML-10), 2010, pp.~367--374.

\bibitem{goldberg1991every}
{\sc D.~Goldberg}, {\em What every computer scientist should know about
  floating-point arithmetic}, ACM Computing Surveys (CSUR), 23 (1991),
  pp.~5--48.

\bibitem{ref1}
{\sc A.~Haar}, {\em Zur theorie der orthogonalen funktionensysteme},
  Mathematische Annalen, 69, pp.~331--371.

\bibitem{hammond2011wavelets}
{\sc D.~K. Hammond, P.~Vandergheynst, and R.~Gribonval}, {\em Wavelets on
  graphs via spectral graph theory}, Applied and Computational Harmonic
  Analysis, 30 (2011), pp.~129--150.

\bibitem{4613}
{\sc M.~Hein, J.-Y. Audibert, and U.~von Luxburg}, {\em Graph {L}aplacians and
  their convergence on random neighborhood graphs}, Journal of Machine Learning
  Research, 8 (2007), pp.~1325--1370.

\bibitem{jackson2008social}
{\sc M.~O. Jackson et~al.}, {\em Social and economic networks}, vol.~3,
  {P}rinceton {U}niversity {P}ress, Princeton, 2008.

\bibitem{kleinberg1999authoritative}
{\sc J.~M. Kleinberg}, {\em Authoritative sources in a hyperlinked
  environment}, Journal of the ACM (JACM), 46 (1999), pp.~604--632.

\bibitem{cite-key12}
{\sc E.~Lieberman, C.~Hauert, and M.~A. Nowak}, {\em Evolutionary dynamics on
  graphs}, Nature, 433 (2005), pp.~312--316.

\bibitem{Luxburg:2007:TSC:1288822.1288832}
{\sc U.~Luxburg}, {\em A tutorial on spectral clustering}, Statistics and
  Computing, 17 (2007), pp.~395--416.

\bibitem{narang2013compact}
{\sc S.~K. Narang and A.~Ortega}, {\em Compact support biorthogonal wavelet
  filterbanks for arbitrary undirected graphs}, IEEE transactions on signal
  processing, 61 (2013), pp.~4673--4685.

\bibitem{newman2006finding}
{\sc M.~E. Newman}, {\em Finding community structure in networks using the
  eigenvectors of matrices}, Physical review E, 74 (2006), p.~036104.

\bibitem{Ng01onspectral}
{\sc A.~Y. Ng, M.~I. Jordan, and Y.~Weiss}, {\em On spectral clustering:
  Analysis and an algorithm}, in ADVANCES IN NEURAL INFORMATION PROCESSING
  SYSTEMS, MIT Press, 2001, pp.~849--856.

\bibitem{olfati2005consensus}
{\sc R.~Olfati-Saber and J.~S. Shamma}, {\em Consensus filters for sensor
  networks and distributed sensor fusion}, in 44th IEEE Conference on Decision
  and Control and 2005 {E}uropean Control Conference. {CDC-ECC}'05, IEEE, 2005,
  pp.~6698--6703.

\bibitem{Rajagopalan06dataaggregation}
{\sc R.~Rajagopalan and P.~K. Varshney}, {\em Data aggregation techniques in
  sensor networks: A survey}, Comm. Surveys and Tutorials, IEEE, 8 (2006),
  pp.~48--63.

\bibitem{ram2011generalized}
{\sc I.~Ram, M.~Elad, and I.~Cohen}, {\em Generalized tree-based wavelet
  transform}, IEEE Transactions on Signal Processing, 59 (2011),
  pp.~4199--4209.

\bibitem{ricaud_SPIE_2013}
{\sc B.~Ricaud, D.~I. Shuman, and P.~Vandergheynst}, {\em On the sparsity of
  wavelet coefficients for signals on graphs}, in SPIE Wavelets and Sparsity,
  San Diego, California, August 2013.

\bibitem{rustamov2013wavelets}
{\sc R.~Rustamov and L.~J. Guibas}, {\em Wavelets on graphs via deep learning},
  in Advances in neural information processing systems, 2013, pp.~998--1006.

\bibitem{sandryhaila2013discrete}
{\sc A.~Sandryhaila and J.~M. Moura}, {\em Discrete signal processing on
  graphs}, IEEE transactions on signal processing, 61 (2013), pp.~1644--1656.

\bibitem{sandryhaila2014big}
{\sc A.~Sandryhaila and J.~M. Moura}, {\em Big data analysis with signal
  processing on graphs: Representation and processing of massive data sets with
  irregular structure}, IEEE Signal Processing Magazine, 31 (2014), pp.~80--90.

\bibitem{sandryhaila2014discrete}
\leavevmode\vrule height 2pt depth -1.6pt width 23pt, {\em Discrete signal
  processing on graphs: Frequency analysis.}, IEEE Trans. Signal Processing, 62
  (2014), pp.~3042--3054.

\bibitem{DBLP:journals/tsp/SandryhailaM13}
{\sc A.~Sandryhaila and J.~M.~F. Moura}, {\em Discrete signal processing on
  graphs}, {IEEE} Trans. Signal Processing, 61 (2013), pp.~1644--1656.

\bibitem{6879640}
{\sc A.~Sandryhaila and J.~M.~F. Moura}, {\em Big data analysis with signal
  processing on graphs: Representation and processing of massive data sets with
  irregular structure}, IEEE Signal Processing Magazine, 31 (2014), pp.~80--90.

\bibitem{6808520}
\leavevmode\vrule height 2pt depth -1.6pt width 23pt, {\em Discrete signal
  processing on graphs: Frequency analysis}, IEEE Transactions on Signal
  Processing, 62 (2014), pp.~3042--3054.

\bibitem{tremblay2014graph}
{\sc N.~Tremblay and P.~Borgnat}, {\em Graph wavelets for multiscale community
  mining}, IEEE Transactions on Signal Processing, 62 (2014), pp.~5227--5239.

\bibitem{tremblay2016subgraph}
\leavevmode\vrule height 2pt depth -1.6pt width 23pt, {\em Subgraph-based
  filterbanks for graph signals}, IEEE Transactions on Signal Processing, 64
  (2016), pp.~3827--3840.

\bibitem{1978WinogradS-aa}
{\sc S.~Winograd}, {\em On computing the discrete {F}ourier transform}, Math.
  Comp., 32 (1978), pp.~175--199.

\bibitem{Wu:1993:OGT:628307.628538}
{\sc Z.~Wu and R.~Leahy}, {\em An optimal graph theoretic approach to data
  clustering: Theory and its application to image segmentation}, IEEE Trans.
  Pattern Anal. Mach. Intell., 15 (1993), pp.~1101--1113.

\end{thebibliography}

\end{document}